\tikzstyle arrowstyle=[scale=1]
\tikzstyle directed=[postaction={decorate,decoration={markings, mark=at position 0.75 with {\arrow[arrowstyle]{stealth}}}}]
\tikzstyle redirected=[postaction={decorate,decoration={markings, mark=at position 0.35 with {\arrow[arrowstyle]{stealth}}}}]
\newtheorem{theorem}{Theorem}[section]
\newtheorem{definition}[theorem]{Definition}
\newtheorem{lemma}[theorem]{Lemma}
\newtheorem{observation}[theorem]{Observation}
\newtheorem{claim}{Claim}
\newtheorem{proposition}[theorem]{Proposition}
\DeclareMathOperator{\supp}{{\rm supp}}
\newcommand{\JCTB}{{\it J. Combin. Theory Ser. B}}
\newcommand{\DM}{{\it Discrete Math.}}
\newcommand{\SIAMDM}{{\it SIAM J. Discrete Math.}}
\newcommand{\CJM}{{\it Canad. J. Math.}}
\newcommand{\EJC}{{\it European J. Combin.}}
\begin{document}
\title{Flows on   flow-admissible signed graphs }
\author{Matt DeVos\\
Department of Mathematics\\
Simon Fraser University, Burnaby, B.C., Canada V5A1S6\\
Email: mdevos@sfu.ca\\
Jiaao Li, You Lu, Rong Luo, Cun-Quan Zhang\thanks{This research project has been partially supported
 by an  NSA grant   H98230-14-1-0154, an NSF grant DMS-1264800}, Zhang Zhang
 \\
 Department of Mathematics\\
 West Virginia University\\
 Morgantown, WV 26505\\
Email:   \{joli,yolu1, rluo, cqzhang\}@math.wvu.edu, zazhang@mix.wvu.edu}

\date{}
\maketitle

\begin{abstract}
In 1983, Bouchet  proposed a conjecture that every flow-admissible signed graph admits
 a nowhere-zero $6$-flow. 
Bouchet himself proved that
such signed graphs admit  nowhere-zero $216$-flows  and Z\'yka further
proved that
such signed graphs admit nowhere-zero $30$-flows. In this paper we show that every  flow-admissible signed graph admits a nowhere-zero $11$-flow.
\noindent

{\bf\small Keywords:} {\small Integer flow; Modulo flow; Balanced $\mathbb Z_{2} \times \mathbb Z_{3}$-NZF; Signed graph;}

\end{abstract}

\section{Introduction}
\label{sc: Introduction}

Graphs or signed graphs considered in this paper are finite and may have multiple edges or loops. For  terminology and notations not defined here we follow \cite{BM2008, Diestel2010, West1996}.

In 1983, Bouchet~\cite{Bouchet83} proposed a flow conjecture that
{\em every flow-admissible signed graph admits a nowhere-zero $6$-flow}.
 Bouchet~\cite{Bouchet83} himself proved that
such signed graphs admit  nowhere-zero $216$-flows;
Z\'yka~\cite{Zyka1987}
proved that
such signed graphs admit nowhere-zero $30$-flows.
In this paper,
we  prove the following result.

\begin{theorem}
\label{thm: 11-flow}
Every flow-admissible signed graph admits a nowhere-zero $11$-flow.
\end{theorem}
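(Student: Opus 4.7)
The plan is to follow the modular-flow strategy used for the ordinary $6$-flow theorem of Seymour, adapting each step to cope with negative edges. The target is to produce a ``balanced $\mathbb{Z}_2 \times \mathbb{Z}_3$-nowhere-zero flow'' on any flow-admissible signed graph, i.e.\ a pair $(f_2, f_3)$ consisting of a $\mathbb{Z}_2$-flow $f_2$ and a $\mathbb{Z}_3$-flow $f_3$ whose supports together cover every edge, and then to lift this pair to an integer flow with values in $\{\pm 1, \ldots, \pm 10\}$.

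First, I would pass to a minimal counterexample $G$. Using vertex switching (which preserves the flow spectrum), suppression of degree-$2$ vertices, contraction of small edge-cuts that do not separate the negative edges in an essential way, and removal of parallel-edge configurations that admit obvious small flows, one should be able to assume that $G$ is $2$-edge-connected, has no ``trivial'' reducible configurations, and the negative edges form a very restricted structure after switching (for instance a small matching, or even a single negative edge in the easiest case). These reductions closely parallel Seymour's for the $6$-flow theorem, with extra bookkeeping forced by the sign function and by the notion of flow-admissibility.

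Next, I would construct $f_2$ and $f_3$ on the reduced graph. For $f_3$, one applies a signed-graph analogue of Jaeger's $\mathbb{Z}_3$-flow theorem to a spanning subgraph chosen so that the edges of $G$ not covered by $f_3$ form a ``small'' subgraph $H$; on $H$ one constructs $f_2$ by a parity/$T$-join argument, exploiting that the negative edges of $H$ are arranged compatibly with a $\mathbb{Z}_2$-flow. To convert $(f_2, f_3)$ into an integer flow, lift each $f_i$ to integer representatives in a small range and form a combination $3\tilde f_2 + 2\tilde f_3$. On an ordinary graph this is already a nowhere-zero $6$-flow; on the signed graph, flow conservation can fail at vertices incident to negative edges by a controlled (mod-$6$) amount, which is then repaired by adding an integer correction flow supported on a short subgraph around the negative edges, with values bounded so that the final flow lies in $\{\pm 1, \ldots, \pm 10\}$.

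The main obstacle will be the simultaneous construction of $f_2$ and $f_3$ on a minimal counterexample: I must secure $f_3$ with support large enough, and simultaneously parity-compatible with the sign function, so that a $\mathbb{Z}_2$-flow $f_2$ exists on its complement. A secondary obstacle is keeping the correction term small enough to preserve the $11$-flow range; this is where the structural consequences of minimality (absence of small edge-cuts of prescribed sign patterns and the restricted shape of the negative-edge set after switching) should enter essentially, to bound both the support and the values of the correction.
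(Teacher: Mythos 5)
Your high-level plan coincides with the paper's: construct a balanced $\mathbb Z_2 \times \mathbb Z_3$-NZF (Theorem~\ref{thm: balanced-6-flow}) and then combine its two parts into one integer flow. The genuine gap is in the lifting step, which you treat as essentially the same as in the ordinary case plus a ``small correction term.'' On signed graphs, a $\mathbb Z_k$-flow does \emph{not} in general lift to an integer $k$-flow, and the failure is not local to the negative edges; it is a global phenomenon tied to unbalanced circuits, and cannot be repaired by a bounded correction supported near the negative edges. The paper makes this precise: there is an infinite family of signed graphs with a $\mathbb Z_3$-NZF but no $4$-NZF (see the Remark after Theorem~\ref{lm: 1,2,4-flow}). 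So your proposed integer representatives $\tilde f_3$ with values in $\{0,\pm 1\}$ (or even $\{0,\pm 1,\pm 2\}$) will simply not be flows, and the combination $3\tilde f_2 + 2\tilde f_3$ is not a well-defined starting point.

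What the paper actually needs is two separate, nontrivial modulo-to-integer conversion theorems for signed graphs. For the $\mathbb Z_2$ part, the balancedness condition (even number of negative edges in $\supp$) is exactly what makes the lift to a $3$-flow possible (Lemma~\ref{TH: 2-to-3}), and the resulting $3$-flow $f_1$ has value $\pm 2$ only on bridges of its support. For the $\mathbb Z_3$ part, the paper proves a new result, Theorem~\ref{lm: 1,2,4-flow}: a $\mathbb Z_3$-NZF lifts to a $5$-flow $f_2$ with $E_{f_2=\pm 3}=\emptyset$ and $E_{f_2=\pm 4}\subseteq B(G)$. The combination used is then $f = 3f_1 + f_2$ (not $3\tilde f_2 + 2\tilde f_3$), and the nontrivial content is in checking that the structural properties of $f_1$ and $f_2$ make $f$ nowhere-zero and bounded by $10$: the bound follows from $|f_1|\le 2$, $|f_2|\le 4$; nonvanishing uses $\supp(f_1)\cup\supp(f_2)=E(G)$ together with $E_{f_2=\pm 3}=\emptyset$ to rule out $3f_1(e)+f_2(e)=0$. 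Your combination $3\tilde f_2+2\tilde f_3$ has range up to $14$ with the needed lifts and can vanish (e.g.\ $\tilde f_2=2$, $\tilde f_3=-3$), so it does not give $11$ even if the lifts existed. In short: the ``lift'' is the theorem, not an afterthought, and your proposal is missing both the correct conversion results and the correct linear combination.
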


In fact,
 we prove a stronger and very structural result as follows, and
Theorem~\ref{thm: 11-flow} is an immediate corollary.

  \begin{theorem}
\label{TH: e=10}
Every flow-admissible signed graph $G$
admits   a $3$-flow $f_1$ and a $5$-flow $f_2$ such that $f = 3f_1 + f_2$
is a nowhere-zero $11$-flow,  $|f(e)| \not = 9$ for each edge $e$, and $|f(e)| = 10$ only if $e \in  B(\supp(f_1))\cap B(\supp(f_2))$, where $B(\supp(f_i))$ is the set of all bridges of the subgraph induced by the edges of
$\supp(f_i)$ $(i=1,2)$.
 \end{theorem}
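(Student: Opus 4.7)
The plan is to build $(f_1, f_2)$ in two stages and then refine via a cycle-exchange argument. The first key observation is arithmetic: for integer values $a \in \{-2,\ldots,2\}$ and $b \in \{-4,\ldots,4\}$, the sum $3a+b$ lies in $\{-10,\ldots,10\}$, and the forbidden values are attained only at specific pairs. Namely, $3a+b=0$ forces $(a,b)\in\{(0,0),(\pm 1,\mp 3)\}$; $3a+b=\pm 9$ forces $(a,b)=(\pm 2,\pm 3)$ with matching signs; and $3a+b=\pm 10$ forces $(a,b)=(\pm 2,\pm 4)$ with matching signs. Thus the goal reduces to exhibiting a $3$-flow $f_1$ and a $5$-flow $f_2$ with $\supp(f_1)\cup\supp(f_2)=E(G)$, avoiding the pairs $(\pm 1,\mp 3)$ and $(\pm 2,\pm 3)$ at every edge, and with $(\pm 2,\pm 4)$ (matching signs) occurring only on edges that are bridges in both $\supp(f_1)$ and $\supp(f_2)$.

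For the first stage, I would reduce to the case that $G$ is $2$-edge-connected and flow-admissible with no trivial local structure, and then invoke the balanced $\mathbb{Z}_2\times\mathbb{Z}_3$-NZF framework highlighted in the abstract (the refined modular-flow machinery underlying Z\'yka's $30$-flow result). The lifts of the $\mathbb{Z}_3$- and $\mathbb{Z}_2$-components---extended from $\mathbb{Z}_2$ to a $5$-flow using the additional slack in $\{-4,\ldots,4\}$---give an initial integer $3$-flow $f_1$ and $5$-flow $f_2$ with $\supp(f_1)\cup\supp(f_2)=E(G)$ and no $(\pm 1,\mp 3)$ conflict, yielding a nowhere-zero combined flow $3f_1+f_2 \in \{-10,\ldots,10\}\setminus\{0\}$. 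The outstanding issues are the $(\pm 2,\pm 3)$ pattern producing $\pm 9$ and the bridge condition governing $\pm 10$.

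For the second stage, I would apply a cycle-exchange argument. Among all valid pairs $(f_1,f_2)$ I would pick one lexicographically minimizing, say, the number of $\pm 9$ edges followed by the number of non-bridge $\pm 10$ edges. For any edge $e$ with $|3f_1(e)+f_2(e)|=9$, one has $(f_1(e),f_2(e))=(\pm 2,\pm 3)$; if $e$ is not a bridge of $\supp(f_1)$, it lies on a balanced cycle or barbell $C \subseteq \supp(f_1)$, and pushing a unit cycle-flow along $C$ in $f_1$ reduces $|f_1(e)|$ to $1$, eliminating the $\pm 9$. An analogous exchange in $\supp(f_2)$ addresses $\pm 10$ values at edges that are non-bridges of $\supp(f_2)$. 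Each modification changes edge values by at most $\pm 1$, preserving the $3$-flow and $5$-flow ranges.

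The main obstacle is the bookkeeping to ensure that a cycle exchange at $e$ does not create new violations at other edges $e'\in C$. When $f_1(e')$ shifts by $\pm 1$ and $f_2(e')=\pm 3$, the new pair could be the forbidden $(\pm 2,\pm 3)$; similar hazards arise at the $(\pm 1,\mp 3)$ pattern. Avoiding this requires either choosing $C$ so that $|f_2(e')|\neq 3$ along $C$, or performing a simultaneous compensating modification of $f_2$ on a $5$-flow-cycle through $e'$, whose slack in $\{-4,\ldots,4\}$ permits such an adjustment. A careful case analysis---leveraging the structure of the initial balanced $\mathbb{Z}_2\times\mathbb{Z}_3$-NZF and the flow-admissibility of $G$---is needed to carry this out uniformly. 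This balancing is the heart of the proof, and it also explains why the theorem only guarantees that $|f(e)|=10$ at edges lying in $B(\supp(f_1))\cap B(\supp(f_2))$: such edges are immune to both cycle-exchange operations and constitute the irreducible residue.
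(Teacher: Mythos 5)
Your arithmetic analysis of which pairs $(f_1(e),f_2(e))$ produce $0,\pm 9,\pm 10$ is correct and matches the paper's. But the proof structure diverges at the key point, and the divergence hides a real gap.

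The paper needs \emph{no} second-stage cycle-exchange. It proves a sharper lifting result (Theorem~\ref{lm: 1,2,4-flow}): a signed graph with a $\mathbb Z_3$-NZF admits a $5$-NZF $g$ with $E_{g=\pm 3}=\emptyset$ and $E_{g=\pm 4}\subseteq B(G)$. Applying this to $\supp(g_2)$ gives a $5$-flow $f_2$ that never takes the value $\pm 3$, which simultaneously kills \emph{both} forbidden patterns $(\pm 1,\mp 3)$ (so $3f_1+f_2\neq 0$) and $(\pm 2,\pm 3)$ (so $3f_1+f_2\neq\pm 9$) at a stroke; combined with Lemma~\ref{TH: 2-to-3}'s statement that $|f_1(e)|=2$ \emph{iff} $e\in B(\supp(f_1))$, the $\pm 10$ case is immediately confined to $B(\supp(f_1))\cap B(\supp(f_2))$. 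Your proposal instead takes a weaker first-stage lift (asserting, without justification, that the lift already avoids $(\pm 1,\mp 3)$) and then tries to repair $(\pm 2,\pm 3)$ and non-bridge $\pm 10$'s by cycle exchanges. That repair step is exactly where the argument does not close. In a signed graph, a non-bridge edge of $\supp(f_1)$ need not lie on a \emph{balanced} circuit inside $\supp(f_1)$; it may lie only on a barbell, and the canonical flow on a barbell is not constant $\pm 1$ (it is $\pm 1$ on the unbalanced circuits and $\pm 2$ on the connecting path, or similar), so ``pushing a unit cycle-flow'' need not keep $f_1$ within range or change $f_1(e)$ by exactly one. Moreover, you explicitly acknowledge the cascading-hazard problem (new forbidden pairs created at other edges of the exchange cycle) but do not give, or even plausibly outline, a termination/potential argument; the lexicographic choice you propose is not shown to decrease under the modification, since new $\pm 9$'s can be created. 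The missing ingredient is the refined modulo-$3$-to-$5$ lifting lemma, which is the actual technical heart of the paper's proof and makes the entire second stage superfluous.
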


Theorem~\ref{TH: e=10}
 may suggest an
  approach to further reduce  $11$-flows to $9$-flows.

 The main approach  to prove the $11$-flow theorem is the following result, which, we believe,  will be a powerful tool in the  study of integer flows of signed graphs, in particular to resolve Bouchet's $6$-flow conjecture.
\begin{theorem}
\label{thm: balanced-6-flow}
Every flow-admissible signed graph admits a balanced  nowhere-zero $\mathbb Z_{2} \times \mathbb Z_{3}$-flow.
\end{theorem}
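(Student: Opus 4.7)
The plan is to proceed by contradiction and study a minimum counterexample $(G,\sigma)$: a flow-admissible signed graph with the fewest edges that admits no balanced nowhere-zero $\mathbb{Z}_2\times\mathbb{Z}_3$-flow. The first step is a battery of standard reductions -- switching, suppression of degree-$2$ vertices, deletion of opposite-sign parallel edges, and contraction across balanced $2$-edge-cuts -- that eliminate small reducible configurations and force $G$ to have minimum degree at least $3$ together with an appropriate form of $2$-edge-connectivity compatible with the signing.

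Next, one decomposes a candidate flow as a pair $(f_1,f_2)$, where $f_1\colon E(G)\to\mathbb{Z}_2$ is a $\mathbb{Z}_2$-flow and $f_2\colon E(G)\to\mathbb{Z}_3$ is a $\mathbb{Z}_3$-flow on $(G,\sigma)$, so that the nowhere-zero condition on the product translates into $\supp(f_1)\cup\supp(f_2)=E(G)$. Because a $\mathbb{Z}_2$-flow on a signed graph is nothing but an even subgraph $H$ of the underlying graph, the task reduces to choosing $H$ so that $(G,\sigma)-H$ carries a nowhere-zero $\mathbb{Z}_3$-flow whose support meets the balance requirement encoded in the word ``balanced.''

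Existence of a suitable $H$ should follow from a $T$-join argument. Let $T$ encode the parity obstructions forced at vertices by the negative edges in $(G,\sigma)$; apply a Tutte--Berge type theorem, using flow-admissibility to rule out obstructions, to produce a $T$-join $H$; and then build $f_2$ on the contraction $(G,\sigma)/H$. For the $\mathbb{Z}_3$-flow on this contraction I would invoke a Jaeger-type $4$-flow theorem -- possibly after lifting to the signed double cover to reduce to the ordinary-graph setting -- with Z\'yka's $30$-flow theorem available as a sledgehammer for small residual blocks that survive all previous reductions.

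The chief obstacle is likely the simultaneous guarantee that $H$ can be chosen so that $(G,\sigma)/H$ satisfies both the edge-connectivity hypothesis required by the Jaeger-type step and the balance property required to make $f_2$ qualify as ``balanced.'' Handling this is expected to demand a careful structural case analysis of small edge-cuts in the minimum counterexample: for each problematic cut, one must either produce a tailored $H$ exploiting the switching flexibility afforded by flow-admissibility, or extract a strictly smaller flow-admissible signed graph, yielding the desired contradiction to minimality.
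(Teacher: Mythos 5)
Your proposal takes a genuinely different route from the paper's, but it has serious gaps that I don't see how to close. The paper does not decompose the problem into ``choose an even subgraph $H$, then find a $\mathbb{Z}_3$-flow on a contraction.'' Instead it introduces the notion of a \emph{shrubbery} (a signed graph with $\Delta\le 3$, no balanced $4$-circuits, flow-admissible cubic subgraphs, and a combinatorial inequality controlling balanced subgraphs), reduces the theorem to cubic shrubberies, relaxes the target object to a \emph{nowhere-zero watering} (allowing nonzero boundary at degree-$1,2$ vertices), and then builds the watering inductively by repeatedly peeling off \emph{removable circuits}. This constructs $f_1$ and $f_2$ simultaneously rather than in two stages.

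The concrete gaps in your plan are the following. First, you assert that producing a suitable even subgraph $H$ ``should follow from a $T$-join argument,'' but you never say what $T$ is, nor why a $T$-join would simultaneously (a) be an even subgraph, (b) contain an even number of negative edges (the balance condition), and (c) leave a contraction that supports a $\mathbb{Z}_3$-NZF. None of these is automatic, and in signed graphs the balance constraint is a global parity condition on the signs inside $\supp(f_1)$ that a $T$-join has no built-in reason to respect. Second, the invocation of ``a Jaeger-type $4$-flow theorem'' to furnish a $\mathbb{Z}_3$-flow is confused: Jaeger's $4$-flow theorem produces $\mathbb{Z}_2\times\mathbb{Z}_2$-flows from $4$-edge-connectivity, not $\mathbb{Z}_3$-flows. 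Guaranteeing $\mathbb{Z}_3$-NZFs from connectivity alone is the territory of Tutte's $3$-flow conjecture and its partial results, and these do not transfer cleanly to signed graphs; indeed, the paper has to prove its own delicate lemma (via Petersen's theorem and Fleischner's splitting lemma) just to convert modulo-$3$ flows into integer flows on bridgeless signed graphs. Third, contraction is problematic in the signed setting: contracting edges of $H$ alters the sign structure and the balance of circuits, so ``lifting'' a $\mathbb{Z}_3$-flow from $(G,\sigma)/H$ back to $G$ requires care you have not provided. Fourth, using Z\'yka's $30$-flow theorem as a ``sledgehammer for small residual blocks'' does not help: a $30$-flow reduced mod~$3$ need not be nowhere-zero, and does not produce the needed $\mathbb{Z}_3$-flow with prescribed support. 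Finally, you correctly identify the simultaneous connectivity-plus-balance requirement as the chief obstacle, but this is exactly where the paper invests its real effort (the long case analysis in the proof of its watering lemma, using the Mesner--Watkins three-vertex circuit theorem and a weak-linkage structure theorem for graphs with no two disjoint unbalanced circuits); your proposal defers it to ``a careful structural case analysis'' without indicating how the cases would be resolved.
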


A $\mathbb{Z}_2\times \mathbb{Z}_3$-flow $(f_1,f_2)$ is called {\em balanced} if $\supp (f_1)$ contains an even number of negative edges.

The rest of the paper is organized as follows: Basic notations and definitions will be introduced in Section~\ref{se:defintions}.  Section~\ref{se:mod flow} will discuss the conversion of modulo flows into integer flows. In particular a new result to convert a modulo $3$-flow to an integer $5$-flow will be introduced and its proof will be presented in Section~\ref{se:mod3-5}. The proofs of Theorems~\ref{TH: e=10} and \ref{thm: balanced-6-flow} will be presented in Sections ~\ref{se:proof 11 flow} and ~\ref{se:6-flow}, respectively.

\section{Signed graphs, switch operations, and flows}
\label{se:defintions}

Let $G$ be a graph.
For $U_1, U_2\subseteq V(G)$,  denote  by $\delta_{G}(U_1, U_2)$  the set of edges with one end in $U_1$ and the other in $U_2$.
For convenience, we write $\delta_G(U_1, V(G)\setminus U_1)$  and $\delta_G(\{v\})$ for  $\delta_G(U_1)$  and $\delta_G(v)$ respectively. The degree of $v$ is $d_G(v)=|\delta_G(v)|$. A $d$-vertex  is a vertex with degree $d$. Let $V_d(G)$ be the set of $d$-vertices in $G$. The maximum degree of $G$ is denoted by $\Delta(G)$. We use $B(G)$ to denote the set of cut-edges of $G$.

A \emph{signed graph} $(G,\sigma)$ is a graph $G$ together with a {\em signature} $\sigma: E(G) \to \{-1,1\}$. An edge $e \in E(G)$ is \emph{positive} if $\sigma(e) =1$ and \emph{negative} otherwise. Denote the set of all negative edges of $(G,\sigma)$ by $E_N(G,\sigma)$. For a vertex $v$ in $G$, we define a new signature $\sigma'$ by changing $\sigma'(e) = -\sigma(e)$ for each $e\in \delta_G(v)$.
We say that $\sigma'$ is obtained from $\sigma$ by making a \emph{switch} at the vertex $v$.
Two signatures are said to be {\em equivalent} if one can be obtained from the other by making a sequence of switch operations. Define the {\em negativeness} of $G$ by $\epsilon (G,\sigma)=\min\{|E_N(G,\sigma')| : \mbox{ $\sigma'$ is equivalent to $\sigma$}\}$. A signed graph is balanced if its negativeness is $0$. That is it is equivalent to a graph without negative edges. For a subgraph $G'$ of $G$, denote $\sigma(G') = \prod_{e\in E(G')} \sigma(e)$.

For convenience, the signature $\sigma$ is usually omitted if no confusion arises or is written as $\sigma_G$ if it needs to emphasize $G$. If there is no confusion from the context, we simply use $E_N(G)$ for  $E_N(G,\sigma)$ and use  $\epsilon (G)$ for  $\epsilon (G,\sigma)$.

Every edge of $G$ is composed of two half-edges $h$ and $\hat{h}$, each of which is incident with one end.
Denote the set of half-edges of $G$ by $H(G)$ and the set of half-edges incident with $v$ by $H_G(v)$.
For a half-edge $h \in H(G)$, we refer to $e_{h}$ as the edge containing $h$.
An {\em orientation} of a signed graph $(G, \sigma)$ is a mapping $\tau: H(G) \to \{-1,1\}$
such that $\tau(h) \tau(\hat{h}) = -\sigma(e_{h})$ for each $h \in H(G)$.
It is convenient to consider $\tau$ as an assignment of orientations on $H(G)$.
Namely, if $\tau(h) =1$, $h$ is a half-edge oriented away from its end and otherwise towards its end. Such an ordered triple $(G, \sigma, \tau)$ is called a {\em bidirected graph}.

\begin{definition}
Assume that $G$ is a signed graph associated with an orientation $\tau$. Let $A$ be an abelian group and $f: E(G) \to A$ be a mapping. The {\em boundary} of $f$ at a vertex $v$ is defined as
$$\partial f (v)=\sum_{h\in H_G(v)} \tau (h) f(e_h).$$
The pair $(\tau,f)$ (or simplify, $f$) is an \emph{$A$-flow} of $G$ if $\partial f (v)=0$ for each $v\in V(G)$, and is an (integer) \emph{$k$-flow} if it is a $\mathbb{Z}$-flow and $|f(e)|<k$ for each $e\in E(G)$.
\end{definition}

Let $f$ be a flow of a signed graph $G$. The {\rm support} of $f$, denoted by $\supp (f)$, is the set of edges $e$ with $f(e)\neq 0$. The flow $f$ is {\em nowhere-zero} if $\supp (f) = E(G)$. For convenience, we abbreviate the notions of
{\em nowhere-zero $A$-flow} and
{\em nowhere-zero $k$-flow} as
{\em
$A$-NZF}
 and
 {\em $k$-NZF}, respectively.
 Observe that $G$ admits an $A$-NZF (resp., a $k$-NZF) under an orientation $\tau$ if and only if it admits an $A$-NZF (resp., a $k$-NZF) under any orientation $\tau'$.
A $\mathbb Z_k$-flow is also called a modulo $k$-flow. For an integer flow $f$ of $G$ and a positive integer $t$, let $E_{f=\pm t} :=\{e\in E(G) : |f(e)|=t\}$.

 A signed graph $G$ is {\em flow-admissible} if it admits a $k$-NZF for some  positive integer $k$. Bouchet~\cite{Bouchet83} characterized all flow-admissible signed graphs as follows.

  \begin{proposition} {\rm (\cite{Bouchet83})}\label{flow admissible}
 A connected signed graph $G$ is flow-admissible if and only if $\epsilon(G)\neq 1$ and there is no cut-edge $b$ such that $G-b$ has a balanced component.
 \end{proposition}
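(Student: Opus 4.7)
My plan is to prove the two directions separately. For necessity, suppose $G$ admits a $k$-NZF $(\tau,f)$ and apply the identity $\sum_{v\in S}\partial f(v)=0$ for various vertex sets $S$: an edge with both ends in $S$ contributes $(\tau(h)+\tau(\hat h))f(e)$, which is $0$ for a positive edge (forced by $\tau(h)\tau(\hat h)=-1$) and $\pm 2f(e)$ for a negative edge. Choosing $S=V(G)$ in a signature that realises $\epsilon(G)=1$ leaves a single nonzero term $\pm 2f(e_0)=0$, contradicting nowhere-zeroness, so $\epsilon(G)\ne 1$. If $b$ is a cut-edge and $C$ is a balanced component of $G-b$, I would switch inside $C$ to make all of its edges positive and apply the identity to $S=V(C)$; only the half-edge of $b$ at $C$ survives, forcing $\pm f(b)=0$, a contradiction.

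For sufficiency, I reduce to the combinatorial statement that every edge of $G$ lies in a signed circuit, i.e., either a balanced cycle or a barbell consisting of two edge-disjoint unbalanced cycles joined by a (possibly trivial) path. Each such circuit supports an integer flow with values in $\{\pm 1,\pm 2\}$, and a generic positive rational combination of flows on signed circuits whose supports cover $E(G)$ yields a nowhere-zero $\mathbb{Q}$-flow, which scales to an integer $k$-NZF for some $k$. So the task becomes: for every $e\in E(G)$, find a signed circuit containing $e$.

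Fix $e\in E(G)$. If some cycle through $e$ is balanced we obtain a Type~I circuit directly. If $e$ is a cut-edge, the hypothesis forces both components of $G-e$ to be unbalanced, so choosing an unbalanced cycle and a linking path to $e$ in each component and pasting through $e$ produces a barbell. The delicate case is when $e$ is not a cut-edge and every cycle through $e$ is unbalanced: fix an unbalanced cycle $C_1$ through $e$ and look for a second unbalanced cycle $C_2$ edge-disjoint from $C_1$, since a path in the connected graph $G$ between $C_1$ and $C_2$ (pruned to be internally disjoint from both) then completes a barbell containing $e$ on $C_1$. If no such $C_2$ exists then $G-E(C_1)$ contains no unbalanced cycle, so every one of its components is balanced and can be switched to have no negative edges, which concentrates all negative edges of $G$ into $E(C_1)$. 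Since $C_1$ is unbalanced, $|E_N(G)|$ is odd, and $\epsilon(G)\ge 2$ forces $|E_N(G)|\ge 3$. The main obstacle is exploiting this concentration: I would search for a chord or an ear of $C_1$ inside $G-E(C_1)$ whose two attachment points lie on arcs of $C_1$ separated by at least two negative edges, thereby splitting $C_1$ into two shorter cycles, one of which is unbalanced and edge-disjoint from an unbalanced reroute of the other half, producing the needed $C_2$; the hypothesis ruling out cut-edges with a balanced component is precisely what guarantees such a chord or ear exists, by preventing a single bottleneck edge of $G$ through which every reroute would otherwise be forced.
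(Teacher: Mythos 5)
The paper does not prove this proposition; it is cited from Bouchet~(1983), so there is no in-paper proof to compare against, and your attempt has to be judged on its own.

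Your necessity argument is correct and clean: summing $\partial f$ over $V(G)$ (resp.\ over $V(C)$ for a balanced component $C$ of $G-b$) after the appropriate switching kills all positive-edge contributions and leaves $\pm 2 f(e_0)=0$ (resp.\ $\pm f(b)=0$), which contradicts nowhere-zeroness. The reduction of sufficiency to ``every edge lies in a signed circuit'' is also a legitimate and standard move, and your first two cases (a balanced cycle through $e$; $e$ a cut-edge with both sides of $G-e$ unbalanced) go through. Even your third case is fine \emph{up to the point} where you switch so that all negative edges lie on $C_1$ and conclude $|E_N(G)|\ge 3$ is odd.

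The gap is in the last step. You assert that when no unbalanced cycle edge-disjoint from $C_1$ exists, one can ``search for a chord or an ear of $C_1$ \dots whose two attachment points lie on arcs of $C_1$ separated by at least two negative edges,'' and that ``the hypothesis ruling out cut-edges with a balanced component is precisely what guarantees such a chord or ear exists.'' Neither half of this is proved: you do not show that such an ear exists, nor that, once it exists, the resulting rerouting actually yields a barbell (or a balanced cycle) \emph{through $e$} — an ear can easily split $C_1$ so that the balanced half misses $e$, and the two unbalanced halves share the ear and are therefore not edge-disjoint, so you do not obtain a handcuff. There is also a smaller unaddressed point earlier in case (c): when an edge-disjoint unbalanced $C_2$ does exist, you must check that $C_1$ and $C_2$ meet in at most one vertex before gluing a path to make a barbell (this does in fact hold here — two edge-disjoint unbalanced cycles meeting in $\ge 2$ vertices force a balanced cycle through $e$, contradicting your case hypothesis — but it needs saying). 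The cleanest way to close the hole is to avoid the ear hunt entirely and argue via the signed-graphic (frame) matroid: its rank function is $|V(H)|-b(H)$ with $b(H)$ the number of balanced components, and $e$ is a coloop iff either $G-e$ is connected and balanced (equivalently $\epsilon(G)\le 1$, and $=1$ since $G$ is unbalanced) or $e$ is a cut-edge with a balanced side; thus under your hypotheses there are no coloops, every edge lies in a circuit of the matroid (balanced cycle, tight handcuff, or loose handcuff), and the rational-combination argument you already set up finishes the proof.
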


\section{Modulo flows on signed graphs}
\label{se:mod flow}

Just like in the study of flows of ordinary graphs and as Theorem~\ref{thm: balanced-6-flow} indicates, 
 the key to make further improvement and to eventually solve  Bouchet's $6$-flow conjecture is to further study how to convert modulo $2$-flows  and modulo $3$-flows into integer flows. The following lemma  converts  a modulo $2$-flow into an integer $3$-flow.

\begin{lemma}
[\cite{CLLZ2018}]
\label{TH: 2-to-3}
If a signed graph is connected and admits a  $\mathbb{Z}_2$-flow $f_1$
such that $\supp(f_1)$ contains an even number of negative edges,
then it also admits a $3$-flow $f_2$ such that
 $\supp(f_1)\subseteq \supp(f_2)$ and $|f_2(e)| = 2$ if and only if $e \in B(\supp(f_2))$.
\end{lemma}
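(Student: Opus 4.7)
The plan is to construct $f_2$ by combining a signed-circuit decomposition of $H:=\supp(f_1)$ with carefully chosen connecting paths in $G$. Because $f_1$ is a $\mathbb{Z}_2$-flow, $H$ is an even subgraph of $G$ (each vertex has even $H$-degree), so $H$ has no cut-edges and decomposes into edge-disjoint ordinary cycles. Since $|E_N(H)|$ is even, the number of unbalanced cycles appearing in any such decomposition is even, and they can be paired up. For each pair $(C_1,C_2)$ of unbalanced cycles I would adjoin a path $P$ in $G$ connecting them (possibly through $E(G)\setminus H$, or degenerate to a single vertex when the two cycles already meet), forming a signed handcuff $C_1\cup P\cup C_2$. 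Using the connectedness of $G$, these paths can be chosen pairwise edge-disjoint, for instance by routing them along a spanning tree of the multigraph obtained from $G$ by contracting each connected component of $H$.

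On each signed circuit I would place an integer $3$-flow by the standard bidirected construction: a $\pm 1$ circulation around each balanced cycle, and on a handcuff $C_1\cup P\cup C_2$ a flow taking values $\pm 1$ on $C_1$ and $C_2$ together with $\pm 2$ on each edge of $P$. The boundary condition at the attachment vertices of $P$ is satisfied precisely because $C_1$ and $C_2$ are unbalanced, so the ``two traversals'' of $P$ combine to a net $\pm 2$. Summing these circuit flows produces an integer $3$-flow $f_2$ of $G$. Since the signed circuits partition $E(H)$ and the connecting paths are edge-disjoint, no cancellation occurs: every edge of $H$ obtains $|f_2|=1$ and every edge of a handcuff path obtains $|f_2|=2$. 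In particular $\supp(f_1)\subseteq\supp(f_2)$ automatically, and all $\pm 2$ edges lie in $E(G)\setminus H$.

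The main technical obstacle is arranging that $|f_2(e)|=2$ holds exactly on $B(\supp(f_2))$. Since $H$ is bridgeless and $H\subseteq\supp(f_2)$, no edge of $H$ lies in $B(\supp(f_2))$, which is consistent with $|f_2|=1$ on $H$. A standard parity argument on the boundary shows that any cut-edge of $\supp(f_2)$ must carry an even value and hence must be $\pm 2$, giving one direction. The hard direction is preventing a $\pm 2$ edge from lying on a cycle of $\supp(f_2)$, which could happen if some handcuff path were routed through a $2$-edge-connected part of the eventual support. I would address this either by the ``forest'' choice of connecting paths suggested above (so that the union of handcuff paths, viewed in $G$ with $H$-components contracted, forms an acyclic system) or by a local repair step: for each offending $\pm 2$ edge $e$ that lies on a cycle $D\subseteq\supp(f_2)$, subtract a $\pm 1$-circulation along $D$, reducing $|f_2(e)|$ to $1$ while keeping $\partial f_2=0$. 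A potential function such as $\sum_{e\in E(G)}|f_2(e)|$ decreases under each such repair, forcing termination. Ensuring that these repairs never destroy the inclusion $\supp(f_1)\subseteq\supp(f_2)$—in particular that the chosen cycle $D$ and the sign of the circulation avoid turning a $\pm 1$ on $H$ into a $0$—is the crux of the argument, and is where the careful up-front design of the decomposition and the path system earns its keep.
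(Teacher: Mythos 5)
As a preliminary remark: this lemma is not proved in the present paper at all; it is imported from Cheng, Lu, Luo, and Zhang (reference [CLLZ2018]), so there is no internal proof here to compare against. The evaluation below therefore treats your proposal on its own terms.

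Your strategy --- decompose the even subgraph $H=\supp(f_1)$ into circuits, use the even parity of $|E_N(H)|$ to pair the unbalanced ones, join each pair by a connecting path into a barbell, and superpose $\pm 1$ and $\pm 2$ circuit flows --- is the right kind of idea, and the parity observation is correct, as is the easy half of the iff (a bridge of $\supp(f_2)$ must carry an even, hence $\pm 2$, value). But there are two genuine gaps. First, the assertion that ``no cancellation occurs'' silently requires the connecting paths to be edge-disjoint not only from one another but also from $H$ itself, and your construction does not guarantee this. Two unbalanced circuits of the decomposition can lie in the same component of $H$ without sharing a vertex; after contracting the components of $H$ they become the same contracted vertex, the spanning tree supplies no connecting path, and an actual connecting path in $G$ may be forced to run through edges of $H$ that already carry $\pm 1$ from some other circuit. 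Superposing then produces values $\pm 3$, destroying the $3$-flow bound, and the sketch offers no control over this (nor over the analogous problem of two handcuff paths forced to share an edge when the pairing is unlucky).

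Second, and more fundamentally, the hard half of the iff --- that every edge with $|f_2(e)|=2$ is a bridge of $\supp(f_2)$ --- is never actually established. Routing the connecting paths along a forest of the contracted graph does not make each path edge a bridge of $\supp(f_2)$: the support also contains the circuits of $H$, and a circuit of $\supp(f_2)$ can thread through several $H$-components even when the path system is acyclic after contraction. The fallback ``local repair'' has exactly the flaw you flag yourself: subtracting a $\pm 1$-circulation along an offending circuit $D$ can zero out edges of $D\cap\supp(f_1)$, breaking $\supp(f_1)\subseteq\supp(f_2)$, and nothing in the sketch controls which circuit $D$ is chosen or with what sign the repair is applied. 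Until a concrete pairing of unbalanced circuits and a routing of connecting paths is exhibited that provably makes every $\pm 2$ edge a bridge --- or until the repair loop is shown to terminate without ever touching $\supp(f_1)$ adversely --- the main conclusion of the lemma does not follow.
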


In this paper, we  will show that  one can convert a   $\mathbb{Z}_3$-NZF to a  very special  $5$-NZF.

\begin{theorem}
\label{lm: 1,2,4-flow}
Let $G$ be a signed graph admitting a $\mathbb Z_{3}$-NZF.
Then $G$ admits a $5$-NZF $g$ such that  $E_{g=\pm3}=\emptyset$  and $E_{g=\pm4}\subseteq B(G)$.
\end{theorem}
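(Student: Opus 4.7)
The plan is to lift the $\mathbb Z_3$-NZF $f$ to an integer $5$-flow by a lift-then-correct scheme. First, fix an orientation $\tau$ of $G$ and choose an integer lift $\tilde f \colon E(G) \to \{-1,1\}$ with $\tilde f(e) \equiv f(e) \pmod 3$. Since $f$ is a $\mathbb Z_3$-flow, $\partial \tilde f(v)$ is divisible by $3$ at every vertex; set $s(v) := \partial \tilde f(v)/3$. The objective is then to find $h \colon E(G) \to \{-1,0,1\}$ with $\partial h(v) = s(v)$ for every $v$ and $h(e)\tilde f(e) \ge 0$ on every non-bridge edge $e$. If such an $h$ exists, then $g := \tilde f - 3h$ is the desired $5$-NZF: a direct case check shows $g$ takes values in $\{\pm 1,\pm 2,\pm 4\}$, so $E_{g=\pm 3}=\emptyset$ is automatic, and $|g(e)|=4$ occurs precisely when $h(e)\tilde f(e)<0$, which by the constraint on $h$ forces $e$ to be a bridge.

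To find $h$, it is convenient to switch the orientation along edges where $\tilde f = -1$. Define $\tau'(h) := \tau(h)\tilde f(e_h)$ on each half-edge and $h' := h\tilde f$; then under $\tau'$ the lift becomes identically $1$, the boundary condition becomes $\partial^{\tau'} h'(v) = s(v)$, and the sign constraint simplifies to $h'(e) \in \{0,1\}$ on non-bridges (with $h'(e) \in \{-1,0,1\}$ permitted on bridges). I would then prove existence of $h'$ by induction on $\sum_v |s(v)|$: if $s \equiv 0$, take $h' \equiv 0$; otherwise select vertices $u,w$ with $s(u)>0$ and $s(w)<0$ (or use a bridge when only one sign appears, exploiting that bridges can absorb $\pm 1$ of correction) and push a unit of correction along a $(u,w)$-path $P$, traversing each non-bridge edge of $P$ in a $\tau'$-forward direction so as to keep $h' \in \{0,1\}$ there. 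This decrements $\sum_v |s(v)|$ by $2$, triggering the induction.

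The main obstacle will be ensuring the existence of a compatible path $P$. If every $(u,w)$-path meets a non-bridge edge $e$ on which the required update would set $h'(e) = -1$, one must reroute around $e$ through a cycle in its $2$-edge-connected block. The rerouting behaves differently for balanced and unbalanced cycles: a balanced cycle preserves the prescribed excess, while an unbalanced cycle alters it in a controlled way, effectively implementing a local switch of the lift $\tilde f$. Proving that some rerouting is always feasible will be the heart of the argument; it will rely on the nowhere-zero property of $f$ (so that no edge is absent from the structural argument) together with the flexibility to iteratively adjust $\tilde f$ itself. I anticipate a delicate case analysis based on the block structure of $G$ and the distribution of negative edges within each block, and perhaps a separate pre-processing step that first routes all excess through bridges to localize the problem inside $2$-edge-connected blocks.
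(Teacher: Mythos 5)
Your reformulation is valid as far as it goes: writing $g = \tilde f - 3h$ with $\tilde f \in \{\pm 1\}$ lifting the $\mathbb{Z}_3$-NZF, $s = \partial\tilde f/3$, and requiring $\partial h = s$, $h \in \{-1,0,1\}$, and $h(e)\tilde f(e)\ge 0$ off bridges does make $g$ a $5$-NZF with $E_{g=\pm3}=\emptyset$ and $E_{g=\pm4}\subseteq B(G)$. But this is an exact restatement of the theorem, not a reduction to something easier, and the existence of such an $h$ is the entire content. The path-pushing/rerouting argument you sketch is precisely where the signed structure bites, and you leave it as ``the heart of the argument'' plus ``a delicate case analysis''; as written there is no argument. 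Concretely, the induction on $\sum_v|s(v)|$ presupposes (a) that $\sum_v s(v)$ behaves well enough that a suitable source/sink pair always exists, which is false as stated in signed graphs since negative edges do not cancel in $\sum_v\partial\tilde f(v)$; (b) that a compatible path can always be found or rerouted inside a block without pushing some $h(e)$ out of $\{-1,0,1\}$ or violating the sign constraint — rerouting through a balanced cycle preserves boundaries while an unbalanced cycle shifts them, and you never establish that the needed combination is available; and (c) a coordination mechanism across bridges, which your ``pre-processing'' remark gestures at but does not supply. None of these is addressed, so there is a genuine gap.

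The paper's proof is structurally quite different and you should compare. It inducts on $|B(G)|$: at a bridge $e=v_1v_2$, it splits $G$ into $B_1',B_2'$ by attaching a negative loop at each $v_i$, applies the inductive hypothesis on one side and a strengthened bridgeless lemma on the other, and glues — crucially using a version of the bridgeless statement (Lemma~\ref{lm: mod-3-flow}) that lets one \emph{prescribe} the flow value on a chosen edge, which is exactly the coordination your plan lacks. The bridgeless base case is handled not by path-pushing but by reducing to cubic graphs via Fleischner's splitting lemma (Lemma~\ref{LE: splitting}) while preserving the mod-$3$ boundary condition, observing that on a cubic graph the $\mathbb{Z}_3$-condition forces $\partial\tau(v)=\pm 3$ at every vertex, and then invoking Petersen's theorem (Lemma~\ref{pp: matching1}) to find a perfect matching containing or avoiding a specified edge; flipping that matching and assigning it value $2$ produces the $3$-NZF with the prescribed value. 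Your outline has no analogue of this perfect-matching step, which is where the actual construction happens. So the proposal is a reasonable plan of attack with a correct framing, but it is not a proof of the statement, and its missing step is not a small one.
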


 Theorem~\ref{lm: 1,2,4-flow}  is  also a key tool in the proof  of the $11$-theorem and its proof will be presented in  Section~\ref{se:mod3-5}.

\begin{remark}
Theorem~\ref{lm: 1,2,4-flow} is sharp in the sense that  there is an infinite family of signed graphs that admits a $\mathbb Z_{3}$-NZF but does not admit a $4$-NZF. For example, the signed graph obtained from a tree in which each vertex is of degree one or three by adding a negative loop at each vertex of degree one.  An illustration is shown in  Fig.~\ref{FIG: no-3-flow}.
\end{remark}

\begin{figure}[h]
\begin{center}
\begin{tikzpicture}[scale=1.2]
\path(-0.8,0.5) coordinate (u1);\draw [fill=black] (u1) circle (0.08cm);
\path (0.8,0.5) coordinate (u2);\draw [fill=black] (u2) circle (0.08cm);
\path (-2.4,0.5) coordinate (u3);\draw [fill=black] (u3) circle (0.08cm);
\path (2.4,0.5) coordinate (u4);\draw [fill=black] (u4) circle (0.08cm);
\path (-0.8,-0.5) coordinate (u5);\draw [fill=black] (u5) circle (0.08cm);
\path (0.8,-0.5) coordinate (u6);\draw [fill=black] (u6) circle (0.08cm);

\draw [redirected, directed, line width=0.85] (u1)--(u3);
\draw [directed, redirected, line width=0.85] (u1)--(u2);
\draw [directed, redirected, line width=0.85] (u4)--(u2);
\draw [directed, redirected, line width=0.85] (u1)--(u5);
\draw [directed, redirected, line width=0.85] (u6)--(u2);


\path (-0.8,-1.5) coordinate (v1); \draw [directed, dotted, line width=1] (v1) arc (270:90:0.5cm);
\path (-0.8,-1.5) coordinate (v11); \draw [directed, dotted, line width=1] (v11) arc (-90:90:0.5cm);

\path (0.8,-0.5) coordinate (v2); \draw [redirected, dotted, line width=1] (v2) arc (90:270:0.5cm);
\path (0.8,-0.5) coordinate (v21); \draw [redirected, dotted, line width=1] (v21) arc (90:-90:0.5cm);

\path (2.4,0.5) coordinate (v3); \draw [redirected, dotted, line width=1] (v3) arc (180:0:0.5cm);
\path (2.4,0.5) coordinate (v31); \draw [redirected, dotted, line width=1] (v31) arc (180:360:0.5cm);

\path (-3.4,0.5) coordinate (v4); \draw [directed, dotted, line width=1] (v4) arc (180:0:0.5cm);
\path (-3.4,0.5) coordinate (v41); \draw [directed, dotted, line width=1] (v41) arc (180:360:0.5cm);

\end{tikzpicture}
\end{center}
\caption{\small A signed graph admitting a $\mathbb{Z}_3$-NZF with all edges assigned with $1$, but no $4$-NZF.}
\label{FIG: no-3-flow}
\end{figure}
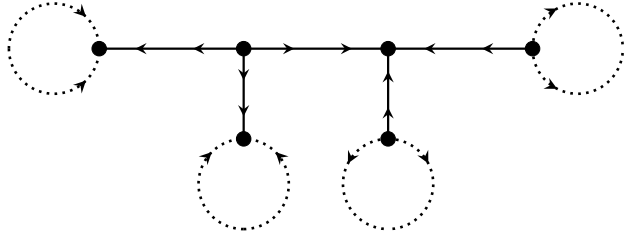

\section{Proof of the $11$-flow theorem}
\label{se:proof 11 flow}
Now we are ready to prove  Theorem~\ref{TH: e=10}.

\medskip
\noindent {\bf Proof of Theorem~\ref{TH: e=10}.} Let $G$ be a connected flow-admissible signed graph. By Theorem~\ref{thm: balanced-6-flow},  $G$ admits  a balanced $\mathbb Z_{2} \times \mathbb Z_{3}$-NZF $(g_1, g_2)$.
By  Lemma~\ref{TH: 2-to-3}, $G$ admits a $3$-flow $f_1$ such that $\supp(g_1)\subseteq \supp(f_1)$ and $|f_1(e)|=2$ if and only if $e \in B(\supp(f_1))$.

By  Theorem~\ref{lm: 1,2,4-flow}, $G$ admits a $5$-flow $f_2$ such that $\supp(f_2)=\supp(g_2)$ and
\begin{equation}
E_{f_2=\pm 3}=\emptyset.
\label{EQ: no 3}
\end{equation}

Since  $(g_1,g_2)$ is a $\mathbb{Z}_2\times \mathbb{Z}_3$-NZF of $G$, \begin{equation}
{\rm supp}(f_1)\cup{\rm supp}(f_2)=\supp (g_1)\cup \supp (g_2) = E(G).
\label{EQ: supp}
\end{equation}

We are to show that
 $f= 3f_1 + f_2$ is a nowhere-zero  $11$-flow described in the theorem.
  Since $|f_1(e)|\leq 2$ and $|f_2(e)| \leq 4$, we have 
$$|f(e)| = |(3f_1+f_2)(e)| \leq 3|f_1(e)| + |f_2(e)| \leq 10
~~~~ \forall e\in E(G).$$
Furthermore,
 by applying Equations~(\ref{EQ: no 3}) and
(\ref{EQ: supp}),
$$3f_1(e)+ f_2(e) \neq 0, \pm 9 ~~~~ \forall e\in E(G).$$
 If $|f(e)| = 10$ for some edge $e \in E(G)$,
 then    $|f_1(e)| = 2$ and $|f_2(e)| = 4$.
Thus, by Lemmas~\ref{TH: 2-to-3}
and~\ref{lm: 1,2,4-flow} again,
the edge
 $e \in B(\supp(f_1))\cap B(\supp(f_2))$ and hence $f = 3f_1 + f_2$ is the $11$-NZF described in Theorem~\ref{TH: e=10}.
  \hfill $\Box$

\section{Proof of Theorem~\ref{lm: 1,2,4-flow}}
\label{se:mod3-5}
 As the preparation of the proof of Theorem~\ref{lm: 1,2,4-flow}, we first need some necessary lemmas.

 The first lemma is a stronger form of the famous Petersen's theorem, and here we omit its proof (see Exercise 16.4.8 in \cite{BM2008}).

\begin{lemma}
\label{pp: matching1}
  Let $G$ be a bridgeless cubic graph and $e_0\in E(G)$. Then $G$ has two perfect matchings $M_1$ and $M_2$ such that $e_0\in M_1$ and $e_0\notin M_2$.
\end{lemma}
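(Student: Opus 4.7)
The plan is to prove both assertions by verifying Tutte's $1$-factor theorem on suitable modifications of $G$. Write $e_0 = uv$, and assume $G$ is connected (otherwise run the argument on the component containing $e_0$ and apply Petersen's theorem to the remaining components).

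The key lemma underlying both halves is the following tightness fact: for every $S \subseteq V(G)$, bridgelessness together with cubicity give $o(G - S) \leq |S|$ and $o(G - S) \equiv |S| \pmod{2}$, and equality $o(G - S) = |S|$ forces $S$ to be independent in $G$, every component of $G - S$ to be odd, and each such component to send exactly three edges to $S$. This follows from the cubic identity $|\delta_G(S, V \setminus S)| = 3|S| - 2|E(G[S])|$ together with the fact that each odd component of $G - S$ sends an odd number of edges into $S$ (degree-sum parity in that component) and hence, by bridgelessness, at least three.

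To produce $M_1$ I would show that $G - u - v$ has a perfect matching. Writing $T = S' \cup \{u,v\}$ for an arbitrary $S' \subseteq V(G) \setminus \{u,v\}$, Tutte's condition becomes $o(G - T) \leq |T| - 2$; the parity congruence reduces the task to excluding $o(G - T) = |T|$, which by the tightness lemma would force $T$ to be independent in $G$ --- impossible since $u,v \in T$ and $uv \in E(G)$. To produce $M_2$ I would show that $G - e_0$ has a perfect matching. In Tutte's inequality $o((G - e_0) - S) \leq |S|$, whenever $\{u,v\} \cap S \neq \emptyset$ we have $(G - e_0) - S = G - S$ and the inequality is immediate. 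When $\{u,v\} \cap S = \emptyset$, deleting $e_0$ can split the component of $G - S$ containing $e_0$ and raise $o$ by at most $2$; this is harmless when $o(G - S) \leq |S| - 2$, while in the extremal case $o(G - S) = |S|$ the tightness lemma guarantees that the component containing $e_0$ is odd, so its splitting produces one odd and one even piece and $o$ does not grow at all.

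The only genuine obstacle is the extremal case $o(G - S) = |S|$; everywhere else the parity congruence together with the crude bound $o \leq |S|$ suffices. The tightness lemma resolves both extremal obstructions: for $M_1$ it contradicts $uv \in E(G)$, and for $M_2$ it rules out precisely the dangerous scenario where the component containing $e_0$ is even and splits into two odd pieces.
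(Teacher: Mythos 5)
Your proof is correct, and it is essentially the standard argument. The paper itself gives no proof of this lemma --- it cites Exercise 16.4.8 of Bondy--Murty and moves on --- so there is no in-paper proof to compare against, but your Tutte-based approach is the canonical one for this strengthening of Petersen's theorem.

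A few comments confirming the details. Your tightness observation is right: in a cubic graph $|\delta_G(S)| = 3|S| - 2|E(G[S])|$, each odd component of $G-S$ meets $S$ in an odd number of edges (parity of degree sums within the component) and hence, by bridgelessness, in at least three; chaining $3\,o(G-S) \le \sum_{C \text{ odd}} |\delta(C,S)| \le |\delta_G(S)| \le 3|S|$ gives both the bound $o(G-S) \le |S|$ and, in the equality case, that $S$ is independent, every component of $G-S$ is odd, and each sends exactly three edges. Combined with the parity congruence $o(G-S) \equiv |S| \pmod 2$ (valid since $|V(G)|$ is even), the only value of $o(G-S)$ above $|S|-2$ is $|S|$ itself, which is exactly the extremal case your tightness lemma addresses. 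For $M_1$, the fact that $\{u,v\} \subseteq T$ with $uv \in E(G)$ kills independence of $T$ and hence rules out $o(G-T) = |T|$, so $G-u-v$ satisfies Tutte's condition. For $M_2$, removing $e_0$ raises $o$ by at most $2$ and only when $e_0$ is a bridge of an even component of $G-S$; the extremal case $o(G-S)=|S|$ forces every component of $G-S$ to be odd, so the problematic split cannot occur. Both halves close cleanly. This is a complete and correct proof.
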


 We also need a splitting lemma due to Fleischner \cite{Fleischner1976}.

Let $G$ be a graph and $v$ be a vertex.
If $F\subset \delta_{G}(v)$, we denote by $G_{[v;F]}$ the graph obtained from $G$
by splitting the edges of $F$ away from $v$.
That is, adding a new vertex $v^{*}$ and changing the common end of edges in $F$ from $v$ to $v^{*}$.

\begin{lemma}\label{LE: splitting}
{\rm(\cite{Fleischner1976})}
Let $G$ be a bridgeless graph and $v$ be a vertex.
If $d_{G}(v)\geq 4$ and $e_{0}, e_{1},e_{2}\in \delta_{G}(v)$
are chosen in a way that $e_{0}$ and $e_{2}$
are in different blocks when $v$ is a cut-vertex,
then either $G_{[v;\{e_{0}, e_{1}\}]}$ or $G_{[v;\{e_{0}, e_{2}\}]}$ is bridgeless.
Furthermore, $G_{[v;\{e_{0}, e_{2}\}]}$ is bridgeless if $v$ is a cut-vertex.
\end{lemma}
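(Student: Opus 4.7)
The plan is to prove both assertions of the lemma by contradiction, relying on a single structural reduction. The reduction is this: since $G$ is bridgeless, any bridge in the split graph $G_{[v;\{e,e'\}]}$ must arise from a $3$-edge-cut $\{e,e',b\}$ of $G$ whose partition places $v$ on one side and the other endpoints of $e, e'$ on the opposite side. Indeed, a bridge of $G_{[v;\{e,e'\}]}$ determines a partition with a single crossing edge; if the new vertex $v^*$ lay on the same side as $v$, merging them would produce a $1$-edge-cut in $G$ (contradicting bridgelessness); hence $v$ and $v^*$ lie on opposite sides, and the corresponding $G$-cut has size exactly $3$ and contains $e$ and $e'$.

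For the first assertion, suppose both splittings produce a bridge. The reduction yields two $3$-edge-cuts $T_1=\{e_0,e_1,b_1\}=\delta(A_1)$ and $T_2=\{e_0,e_2,b_2\}=\delta(A_2)$ with $v\in A_1\cap A_2$ and $u_0\in B_1\cap B_2$ (where $u_i$ is the end of $e_i$ distinct from $v$). Partition $V(G)$ into the four sets $P_{00}=A_1\cap A_2$, $P_{01}=A_1\setminus A_2$, $P_{10}=A_2\setminus A_1$, $P_{11}=V\setminus(A_1\cup A_2)$, and invoke the submodular identity
\[
|\delta(A_1)|+|\delta(A_2)|=|\delta(P_{00})|+|\delta(P_{11})|+2\,e(P_{01},P_{10}),
\]
together with the bridgeless lower bound $|\delta(P_{ij})|\ge 2$ whenever $P_{ij}$ is a non-empty proper subset of $V(G)$. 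A case analysis on where $u_1,u_2$ lie among $P_{01},P_{10},P_{11}$ shows that each configuration either forces some $|\delta(P_{ij})|=1$ for a non-empty $P_{ij}$ (a bridge of $G$, contradiction), or else forces $v$'s only edges leaving $P_{00}$ to be among $\{e_0,e_1,e_2\}$ while $G[P_{11}]$ stays connected. In the latter situation the degree bound $d(v)\ge 4$ forces $P_{00}\setminus\{v\}\ne\emptyset$, so $v$ is a cut-vertex of $G$; a path from $u_0$ to $u_2$ in $G[P_{11}]$ then yields a cycle through both $e_0$ and $e_2$, placing them in the same block of $v$ and contradicting the hypothesis.

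For the "furthermore" statement, suppose $v$ is a cut-vertex with $e_0$ and $e_2$ in distinct blocks $B_0$ and $B_2$, and that $G_{[v;\{e_0,e_2\}]}$ nevertheless has a bridge. The reduction provides a $3$-edge-cut $\{e_0,e_2,b\}=\delta(X)$ with $v\in X$ and $u_0,u_2\in V\setminus X$. Let $C_0,C_2$ be the components of $G-v$ containing $u_0,u_2$; they are distinct because $B_0\ne B_2$. If either $C_i$ straddled the cut, there would be a cross-edge inside $C_i$ (by its connectedness), which can only be $b$; but then every $v$-edge in the opposite block $B_{1-i}$ still crosses, contributing at least two further cut-edges and exceeding $|\delta(X)|=3$. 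Hence $C_0,C_2\subseteq V\setminus X$, which means every $v$-edge in $B_0\cup B_2$ crosses the cut. Since each $B_i$ is $2$-connected, $v$ has at least two edges in each, giving at least $4$ cross-edges incident to $v$ alone, contradicting $|\delta(X)|=3$.

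The main obstacle is the case analysis in the first part: one must verify that the four-piece partition and the submodular identity really exhaust the configurations and always lead to the structural conclusion that $v$ is a cut-vertex with $e_0$ and $e_2$ in the same block. In particular, the degree bound $d(v)\ge 4$ must be combined carefully with bridgelessness of $G$ to eliminate every cross-edge configuration other than the "all endpoints in $P_{11}$" scenario and its minor variants.
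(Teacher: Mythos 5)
The paper does not prove this lemma; it is cited from Fleischner \cite{Fleischner1976}, and no argument is given in the text. So there is nothing in the paper to compare against, and I evaluate your proof on its own terms.

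Your core reduction is correct: a bridge of $G_{[v;\{e,e'\}]}$ must separate $v$ from $v^{*}$ (otherwise re-identifying them leaves a $1$-edge-cut in $G$), and the bridge cannot be $e$ or $e'$ itself (otherwise the other one of $e,e'$ becomes a bridge of $G$), so it corresponds to a $3$-edge-cut $\{e,e',b\}=\delta_G(A)$ with $v\in A$ and the far endpoints of $e,e'$ in $V\setminus A$. It would be worth stating the last exclusion explicitly, since it is used implicitly to get the cut of size exactly three.

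The uncrossing argument for the first assertion also checks out once the companion identity
\[
|\delta(A_1)|+|\delta(A_2)|=|\delta(P_{01})|+|\delta(P_{10})|+2\,e(P_{00},P_{11})
\]
is used alongside the one you display, together with $|\delta(\cdot)|\ge 2$ on non-empty proper parts. Running the cases: if $P_{01}=P_{10}=\emptyset$ one gets $\delta(P_{00})=\{e_0,e_1,e_2\}$ directly; if exactly one is empty, say $P_{01}=\emptyset$, then $e_2$ is forced to be $b_1$ and again $\delta(P_{00})=\{e_0,e_1,e_2\}$; if both are non-empty, the companion identity forces $|\delta(P_{01})|=|\delta(P_{10})|=2$ and $e(P_{00},P_{11})=\{e_0\}$, so $e_1\in e(P_{00},P_{10})$ and $e_2\in e(P_{00},P_{01})$, and a quick count shows each of $|e(P_{00},P_{10})|$, $|e(P_{00},P_{01})|$ equals $1$ (any other value leaves some non-empty $P_{ij}$ with boundary $1$), whence $\delta(P_{00})=\{e_0,e_1,e_2\}$ once more. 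The fourth edge at $v$ then stays inside $P_{00}$, making $v$ a cut-vertex, and the connectivity of the complementary side follows from bridgelessness. One imprecision: in the two-sided case $u_2\in P_{01}$, so the needed $u_0$--$u_2$ path lives in $G[P_{01}\cup P_{11}]$ rather than in $G[P_{11}]$ as you wrote; each of $G[P_{01}]$ and $G[P_{11}]$ is connected by the boundary bound, and they are joined by an edge, so the argument still closes. The ``furthermore'' part is sound; the fact driving the count is that a block of a bridgeless graph incident with $v$ is $2$-connected, hence contributes at least two $v$-edges to $\delta(X)$, which pushes the total past $|\delta(X)|=3$.
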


Let $G$ be a signed graph. A path $P$ in $G$  is called
 a {\em subdivided edge}
of $G$ if every internal vertex of $P$ is  a $2$-vertex. The {\em suppressed graph} of $G$, denoted by $\overline{G}$, is the
signed graph obtained from $G$ by replacing each maximal subdivided edge $P$ with
a single edge $e$ and assigning $\sigma(e)=\sigma(P)$.

The following result is proved in ~\cite{Xu2005}  which gives  a sufficient condition  when a modulo $3$-flow and an integer $3$-flow are equivalent for signed graphs.
\begin{lemma}
[\cite{Xu2005}]
\label{TH: Xu-Zhang-1}
Let $G$ be a bridgeless signed graph. If $G$ admits a $\mathbb{Z}_3$-NZF, then it also admits a $3$-NZF.
\end{lemma}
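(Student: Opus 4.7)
The strategy is to lift the given $\mathbb{Z}_3$-NZF to an integer-valued representative and then iteratively correct the resulting boundary discrepancies by local modifications on edges, whose composition along signed walks reduces the total imbalance; bridgelessness guarantees the existence of enough such rerouting structures.

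Concretely, I would fix an orientation $\tau$ of $G$ and choose $\tilde f : E(G) \to \{-1,+1\}$ with $\tilde f(e) \equiv f(e) \pmod{3}$. Since $f$ is a $\mathbb{Z}_3$-flow, at every vertex $\partial \tilde f(v) \in 3\mathbb{Z}$. Define the excess $s(\tilde f) := \sum_{v \in V(G)} |\partial \tilde f(v)|$ and take $\tilde f$ minimizing $s(\tilde f)$ among all such representatives. If $s(\tilde f)=0$, then $\tilde f$ is already a $3$-NZF with values in $\{-1,+1\}$. Otherwise, pick a vertex $u$ with $\partial \tilde f(u) \neq 0$; WLOG $\partial \tilde f(u) \geq 3$.

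The key local move is to replace $\tilde f(e)$ by the other representative of its $\mathbb{Z}_3$-class lying in $\{-2,-1,1,2\}$, namely $+1 \mapsto -2$ or $-1 \mapsto +2$. This alteration adds $\pm 3$ to $\tilde f(e)$ (opposite to its current sign) and hence shifts $\partial \tilde f$ at the two endpoints of $e$ by $\pm 3$ each --- in opposite senses when $e$ is positive, in the same sense when $e$ is negative. The plan is to compose such single-edge moves along a carefully selected edge sequence so that (i) the excess at $u$ strictly decreases, and (ii) no other vertex gains in absolute excess. Using bridgelessness, from $u$ one locates either (a) a \emph{balanced walk} to a vertex $u'$ of opposite-sign excess, along which composing the moves transports $3$ units of excess from $u$ to $u'$ and reduces $s(\tilde f)$ by at least $6$; or (b) an \emph{unbalanced closed walk} based at $u$, along which the composed moves cancel $3$ units of excess at $u$ against itself, again reducing $s(\tilde f)$ by at least $6$. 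Either option contradicts minimality, so $s(\tilde f) = 0$ and $\tilde f$ is the desired $3$-NZF.

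The main obstacle is careful sign bookkeeping along walks containing negative edges: because modifying a negative edge changes both endpoint boundaries in the same direction, one must arrange the composition of moves consistently with both the bidirection $\tau$ and the parity of negative edges traversed along the walk. When no balanced path from $u$ reaches a vertex of opposite excess, the vertices balanced-reachable from $u$ must have excesses summing to a nonzero multiple of $3$, and then bridgelessness is precisely what forces an unbalanced closed walk through $u$ (by a standard ear/barbell argument) to realize case (b); a bridge could otherwise separate the unbalanced region from $u$ and block the cancellation, which is exactly why the hypothesis cannot be dropped.
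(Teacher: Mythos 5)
The paper does not actually prove Lemma~\ref{TH: Xu-Zhang-1}: it is cited from Xu--Zhang~\cite{Xu2005}, and what the paper does prove is the strengthening Lemma~\ref{lm: mod-3-flow}, by a completely different route. There, a minimal counterexample is reduced to a cubic graph via Fleischner's splitting (Lemma~\ref{LE: splitting}); in the cubic case the $\mathbb{Z}_3$-flow condition forces every vertex to have all three half-edges oriented the same way, and Petersen's theorem (Lemma~\ref{pp: matching1}) supplies a perfect matching whose reversal, weighted $2$ against $1$ on the remaining edges, is immediately a $3$-NZF. That structural argument bypasses all the sign bookkeeping your plan must confront head-on, and it yields the extra control on the flow value of a prescribed edge that the paper needs later.

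Your plan --- lift to values in $\{\pm 1,\pm 2\}$ and iteratively decrease the total boundary defect $s(\tilde f)$ --- has genuine gaps. First, the stated dichotomy ``balanced walk to an opposite-sign excess, or unbalanced closed walk at $u$'' is incomplete. On a signed graph $\sum_{v}\partial\tilde f(v)$ need not vanish; it equals $2\sum_{e\in E_N(G)}\pm\tilde f(e)$, a multiple of $6$, so you may face, say, exactly two defective vertices with $\partial\tilde f(u_1)=\partial\tilde f(u_2)=3$. Then there is no opposite-sign target, and an unbalanced closed walk at $u_1$ merely sends $\partial\tilde f(u_1)$ from $3$ to $-3$, leaving $s(\tilde f)$ unchanged and producing no contradiction to minimality. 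The move you actually need in that configuration is an \emph{unbalanced} walk between two \emph{same-sign} defective vertices, a third case you never state. Second, and more seriously, the existence of a suitable walk is the whole content of the lemma and is only asserted. Each single-edge correction has a forced effect $\mp 3$ at each endpoint, determined by the bidirection, so the walk you compose must be sign-consistent step by step --- a ``directed'' walk in the bidirected sense, whose required direction flips across negative edges. You concede this is ``the main obstacle'' and then dispose of it with ``a standard ear/barbell argument,'' but that argument is not standard in this setting, and the role of bridgelessness (which you correctly identify as essential) is precisely what has to be proved, not invoked. As written this is a plan for a proof, not a proof.
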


Lemma~\ref{TH: Xu-Zhang-1}  is strengthened  in the following lemma, which will be served as the induction base  in the proof of Theorem~\ref{lm: 1,2,4-flow}.
\begin{lemma}
\label{lm: mod-3-flow}
Let $G$ be a bridgeless signed graph admitting a $\mathbb Z_{3}$-NZF. Then for any $e_0\in E(G)$ and for any $i\in \{1, 2\}$, $G$ admits a $3$-NZF such that $e_0$ has the flow value $i$.
\end{lemma}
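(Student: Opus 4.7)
The plan is to extend Lemma~\ref{TH: Xu-Zhang-1} by adding control over the flow value at $e_0$, reducing to cubic graphs and then applying Lemma~\ref{pp: matching1}. The key structural observation driving the reduction is that in any integer $3$-NZF of a cubic graph, the three signed flow values at every vertex lie in $\{\pm 1,\pm 2\}$ and sum to zero, so exactly one of them has absolute value $2$. Hence the set of edges with $|f|=2$ forms a perfect matching of $G$, and prescribing $|f(e_0)|$ amounts to prescribing whether $e_0$ lies in this matching; once $|f(e_0)|$ is pinned down, the global sign flip $f\mapsto -f$ (still a $3$-NZF) yields $f(e_0)=i$ for either prescribed $i\in\{1,2\}$.

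The reduction proceeds by induction on $|V(G)|+|E(G)|$. If $G$ has a degree-$2$ vertex not incident to $e_0$, suppress it, apply induction to the smaller graph (which remains bridgeless and $\mathbb{Z}_3$-flow-admissible), and lift the resulting $3$-NZF back to $G$ without changing the value at $e_0$. If $G$ has a vertex $v$ of degree at least $4$, apply Lemma~\ref{LE: splitting} to split off a pair of half-edges at $v$, chosen simultaneously to preserve bridgelessness, to have $\mathbb{Z}_3$-flow values that sum to zero modulo three at $v$ (so that the given $\mathbb{Z}_3$-NZF descends to the split graph), and to avoid the half-edge of $e_0$ when $v$ is an endpoint of $e_0$. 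These reductions terminate at a cubic graph.

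In the cubic base case, Lemma~\ref{pp: matching1} supplies perfect matchings $M_1\ni e_0$ and $M_2\not\ni e_0$. For each $M_j$ I would build a $3$-NZF of $G$ with $2$-support $M_j$ by orienting each cycle of $G-M_j$, assigning values $\pm 1$ along the cycle, and balancing at each vertex with a $\pm 2$ value on the incident $M_j$-edge. Writing out the bidirected Kirchhoff condition at a cubic vertex shows this assignment is consistent exactly when each cycle of $G-M_j$ satisfies a certain signature-parity condition; the hypothesis that $G$ admits a $\mathbb{Z}_3$-NZF then guarantees that at least one signature-compatible perfect matching exists, and alternating-cycle exchanges inside $G$ let us toggle the membership of $e_0$ in the matching without breaking compatibility.

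The main obstacle will be in the cubic base case, namely reconciling the matching-containment requirement coming from Lemma~\ref{pp: matching1} with the signature-parity condition needed for the $3$-NZF construction, and analogously in the splitting step one must argue that some admissible pair of half-edges at $v$ also carries opposite $\mathbb{Z}_3$-flow values, which I expect to handle by a parity count at $v$ together with the flexibility guaranteed by Lemma~\ref{LE: splitting}. The plan to overcome the cubic difficulty is to start from a signature-compatible matching $M_0$ produced by the $\mathbb{Z}_3$-NZF, then apply alternating-cycle exchanges within $G-M_0$ to move $e_0$ in or out, exploiting the fact that such exchanges modify the matching symmetrically along a closed walk and therefore preserve the global signature-parity condition on the cycles of the complementary $2$-factor.
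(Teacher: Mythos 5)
Your overall architecture matches the paper's — reduce to cubic via suppression and vertex-splitting, then build a $3$-NZF from a perfect matching chosen via Lemma~\ref{pp: matching1} — but there is a genuine gap in the reduction step. You propose to split off a pair of half-edges at a vertex $v$ of degree at least $4$ whose $\mathbb{Z}_3$-flow values sum to zero, so the $\mathbb{Z}_3$-NZF descends to the split graph. Normalizing so all flow values equal $1$, this asks for two half-edges at $v$ of opposite orientation sign, and the ``parity count at $v$'' you invoke only gives $\partial\tau(v)\equiv 0\pmod 3$; with $d_G(v)$ terms equal to $\pm 1$ this does not exclude all half-edges at $v$ having the same sign when $3\mid d_G(v)$ (e.g.\ $d_G(v)=6$, $\partial\tau(v)=6$). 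In that case no splittable pair satisfies your requirement. The paper's fix is to allow the split to break the boundary condition at $v^*$ and then repair it by attaching a new positive edge $vv^*$ with a suitably chosen direction; the measure $\beta(G)=\sum_v|d_G(v)-2.5|$ still drops. Note also that $|V|+|E|$ increases under a bare split, so your induction measure must be replaced (or every split must be bundled with an immediate suppression).

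In the cubic base case you correctly see that $E_{f=\pm 2}$ is a perfect matching and that building $f$ from a matching $M$ imposes a signature-parity constraint on the cycles of $G-M$, but you then set yourself the harder problem of producing one ``compatible'' matching and migrating $e_0$ via alternating-cycle exchanges, with the exchange-invariance of compatibility left as an unsubstantiated expectation. The observation you are missing, which is what the paper uses, is that the $\mathbb{Z}_3$-NZF yields an orientation $\tau$ with $\partial\tau(v)\equiv 0\pmod 3$ for every $v$; on a cubic graph this forces $\partial\tau(v)=\pm 3$, so all three half-edges at each vertex point the same way. Consequently the parity constraint $(-1)^{|C|}\sigma(C)=1$ holds for \emph{every} cycle $C$ of $G$, not merely for the cycles of one particular $2$-factor, and hence \emph{every} perfect matching $M$ is compatible: reverse the $M$-edges and set $f=2$ on $M$ and $f=1$ elsewhere. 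Lemma~\ref{pp: matching1} alone then yields $f(e_0)=1$ and $f(e_0)=2$, and the exchange machinery is unnecessary. As written, both of your key cubic-case claims (existence of a compatible matching and invariance under exchange) are unproved, and closing either gap essentially amounts to discovering that the constraint is a global property of $G$.
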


\begin{proof} Let $G$ be a counterexample with $\beta(G):=\sum_{v\in V(G)}|d_G(v)-2.5|$ minimum.
Since
 $G$ admits a $\mathbb{Z}_3$-NZF, there is an orientation $\tau$ of $G$ such that  for each $v\in V(G)$,
 \begin{equation}\label{eq-1}
 \partial \tau(v) :=\sum_{h\in H_G(v)}\tau(h) \equiv 0\pmod 3.
 \end{equation}

We claim $\Delta(G)\leq 3$. Suppose to the contrary that $G$ has a vertex $v$ with $d_G(v)\ge 4$.
By Lemma~\ref{LE: splitting}, we can split a pair of edges $\{e_1,e_2\}$ from $v$ such that
  the new signed graph $G'=G_{[v; \{e_1,e_2\}]}$ is still bridgeless. In $G'$, we consider $\tau$ as an orientation on $E(G')$ and
denote the common end of $e_{1}$ and $e_{2}$ by $v^{*}$.
If $\partial \tau(v^*)=0$, then $\beta(G')<\beta(G)$ and by Eq.~(\ref{eq-1}),  $\partial \tau(u)\equiv 0 \pmod 3$ for each $u\in V(G')$, a contradiction to the minimality of $\beta(G)$.
If $\partial \tau(v^*)\neq 0$, then we further add a positive edge $vv^{*}$ to $G'$ and denote the resulting signed graph by $G''$. Let $\tau''$ be the orientation of $G''$ obtained from $\tau$ by assigning
$vv^*$ with a direction such that $\partial {\tau''}(v^*)\equiv 0 \pmod 3$. Then by Eq.~(\ref{eq-1}),  $\partial {\tau''}(u)\equiv 0 \pmod 3$ for each $u\in V(G'')$. Since $\beta(G'')<\beta(G)$, we obtain a contradiction to the minimality of $\beta(G)$ again. Therefore $\Delta (G) \leq 3$.

Since $G$ is bridgeless,  every vertex of $G$ is of degree $2$ or $3$. Note that the existence of the desired $3$-NZFs is preserved under the suppressing operation. Then the suppressed signed graph $\overline{G}$ of $G$ is also a counterexample,  and $\beta(\overline{G})<\beta(G)$ when $G$ has some  $2$-vertices. Therefore $G$ is cubic by the minimality of $\beta(G)$.

Since $G$ is cubic, by Eq. (\ref{eq-1}), either $\partial \tau (v)=d_G(v)$ or $\partial \tau (v)=-d_G(v)$
  for each $v\in V(G)$. By Lemma~\ref{pp: matching1}, we can choose two perfect matchings $M_1$ and $M_2$ such that $e_0\notin M_1$ and $e_0\in M_2$.
  For $i=1, 2$, let $\tau_i$ be the orientation of $G$ obtained from $\tau$ by reversing the directions of all edges of $M_i$, and define a mapping $f_i: E(G) \to \{1,2\}$ by setting $f_i(e)=2$ if $e\in M_i$ and $f_i(e)=1$ if $e\notin M_i$. Then $f_1$ and $f_2$ are two desired $3$-NZFs of $G$ under $\tau_1$ and $\tau_2$, respectively,  a contradiction.
\end{proof}

Now we are ready to complete the proof of Theorem~\ref{lm: 1,2,4-flow}.

\medskip
\noindent
{\bf Proof of Theorem~\ref{lm: 1,2,4-flow}}
We will prove by induction on $t=|B(G)|$, the number of cut-edges in $G$. If $t=0$, then $G$ is bridgeless and  it is a direct corollary of Lemma~\ref{lm: mod-3-flow}. This establishes the base of the induction.

Assume  $t>0$. Let $e=v_1 v_2$ be a cut-edge in $B(G)$ such that one component, say $B_1$, of $G-e$
is minimal. Let $B_2$ be the other component of $G-e$. Since $G$ admits a $\mathbb{Z}_3$-NZF, $\delta(G) \geq 2$. Thus  $B_1$ is bridgeless and nontrivial.   WLOG assume $v_i\in B_i$ ($i=1,2$).
Let $B_i'$ be the graph obtained from $B_i$ by adding a negative loop $e_i$ at $v_i$. Then $B_i'$ admits a $\mathbb{Z}_3$-NZF since $G$ admits a $\mathbb{Z}_3$-NZF.
By induction hypothesis, $B_2'$ admits a $5$-NZF $g_2$ with $g_2(e_2)=a\in \{1, 2\}$. By Lemma~\ref{lm: mod-3-flow}, $B_1'$ admits a $3$-NZF
$g_1$ such that $g_1(e_1)=a$. Hence we can extend
$g_1$ and $g_2$ to a $5$-NZF $g$ of $G$ by setting $g(e)=2a$. Clearly $g$ is a desired $5$-NZF of $G$.
\hfill $\Box$

\section{Proof of Theorem~\ref{thm: balanced-6-flow}}
\label{se:6-flow}

In this section, we will complete the proof of
Theorem~\ref{thm: balanced-6-flow}, which is divided into two steps:  first to reduce it from general flow-admissible signed graphs to cubic shrubberies (see  Lemma~\ref{lm: reduction}); and then prove that every cubic shrubbery admits a balanced $\mathbb{Z}_2\times \mathbb{Z}_3$-NZF by showing a stronger result (see Lemma \ref{lm: water}).

We first need
some terminology and notations.
Let $G$ be a graph.
 For an edge  $e\in E(G)$, {\em contracting} $e$ is done by deleting $e$ and then (if $e$ is not a loop) identifying its ends.  For $S\subseteq E(G)$, we use $G/S$ to denote the resulting graph obtained from $G$ by contracting all edges in $S$.

For a path $P$, let $End(P)$ and $Int(P)$ be the sets of the ends and internal vertices of $P$, respectively.  For $U_1, U_2\subseteq V(G)$, a $(U_1, U_2)$-path is a path $P$ satisfying $|End(P)\cap U_i|=1$ and $Int(P)\cap U_i=\emptyset$ for $i=1,2$; if $G_1$ and $G_2$ are subgraphs of  $G$, we write $(G_1,G_2)$-path instead of $(V (G_1),V(G_2))$-path.
Let $C=v_1\cdots v_r v_1$ be a circuit. A {\em segment} of $C$ is the path $v_i v_{i+1}\cdots v_{j-1}v_j \pmod r$ contained in $C$ and is denoted by $v_iCv_j$ or $v_jC^-v_i$.  An $\ell$-circuit  is a circuit with length $\ell$.

For a plane graph $G$ embedded in the plane $\Pi$, a {\em face} of $G$ is a connected topological region (an open set) of $\Pi\setminus G$. If the boundary of a face is a circuit of $G$, it is called a {\em facial circuit} of $G$.  Denote $[1,k] = \{1,2,\dots,k\}$.

\subsection{Shrubberies}

Let $G$ be a signed graph and $H$ be a connected signed subgraph of $G$. An edge $e\in E(G)\setminus E(H)$ is called a {\em chord} of $H$ if both ends of $e$ are in $V(H)$. We denote the set of chords of $H$ by ${\cal C}_G(H)$ or simply ${\cal C}(H)$, and partition ${\cal C}(H)$ into
$$\mathcal{U}(H)=\mathcal{U}_G(H)=\{e\in \mathcal{C}(H) : H+e \mbox{ is unbalanced}\} \mbox{ and } \mathcal{B}(H)=\mathcal{B}_G(H)=\mathcal{C}(H)\setminus \mathcal{U}(H).$$
In particular, if $H$ is a circuit $C$ that either is unbalanced or satisfies $|\mathcal{U}(C)|+|V_2(G)\cap V(C)|\geq 2$, then it is \emph{removable}.

A signed graph $G$ is called a \emph{shrubbery} if it satisfies the following requirements:
\begin{itemize}
\item[(S1)] $\Delta (G)\leq 3$;

\item[(S2)] every signed cubic subgraph of $G$ is flow-admissible;

\item[(S3)] $|\delta_G(V(H))|+\sum_{x\in V(H)}(3-d_G(x))+2|\mathcal{U}(H)|\geq 4$ for any balanced and connected signed subgraph $H$ with $|V(H)|\geq 2$;

\item[(S4)] $G$ has no balanced $4$-circuits.
\end{itemize}

By the above definition, the following result is straightforward.

\begin{proposition}
\label{PROP: inheritance}
Every signed subgraph of a shrubbery is still a shrubbery.
\end{proposition}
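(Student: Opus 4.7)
The plan is to verify each of the four defining conditions (S1)--(S4) of a shrubbery for an arbitrary signed subgraph $G'$ of a shrubbery $G$. Three of the four are immediate: (S1) holds because $d_{G'}(v)\leq d_G(v)\leq 3$ for every $v\in V(G')$; (S2) holds because any signed cubic subgraph of $G'$ is a signed cubic subgraph of $G$ (same vertex set, same edge set, all degrees equal to $3$), hence flow-admissible by (S2) for $G$; and (S4) holds because a balanced $4$-circuit in $G'$ would remain a balanced $4$-circuit in $G$, contradicting (S4) for $G$.

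The substantive step is (S3). Let $H$ be any balanced, connected signed subgraph of $G'$ with $|V(H)|\geq 2$. Since $H$ is equally a balanced, connected signed subgraph of $G$, condition (S3) applied in $G$ yields
$$\Phi_G(H):=|\delta_G(V(H))|+\sum_{x\in V(H)}(3-d_G(x))+2|\mathcal{U}_G(H)|\;\geq\; 4.$$
The claim that will do the work is that $\Phi(H)$ is monotonically non-decreasing under passage from $G$ to any subgraph that still contains $H$; from this, $\Phi_{G'}(H)\geq \Phi_G(H)\geq 4$, which is precisely (S3) for $G'$.

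To establish monotonicity, it suffices to handle the deletion of a single edge $e\notin E(H)$, since any deleted vertex may first be stripped of its incident edges and must lie outside $V(H)$. There are three cases according to how many ends of $e$ lie in $V(H)$. If neither end is in $V(H)$, all three terms of $\Phi$ are unchanged. If exactly one end $x\in V(H)$, then $e\in\delta_G(V(H))$, so $|\delta(V(H))|$ drops by $1$ while $3-d(x)$ rises by $1$, giving net change $0$. If both ends of $e$ lie in $V(H)$, then $e$ is a chord of $H$: the sum $\sum_{x\in V(H)}(3-d(x))$ rises by $2$, while either $e\in\mathcal{U}_G(H)$ (so $2|\mathcal{U}|$ drops by $2$, for net change $0$) or $e\in\mathcal{B}_G(H)$ (so $2|\mathcal{U}|$ is unchanged, for net change $+2$). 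In every case $\Phi$ is non-decreasing, which finishes the proof.

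The only real obstacle is this bookkeeping for (S3); the difficulty is keeping the contributions of chords (split between $\mathcal{U}$ and $\mathcal{B}$) and of boundary edges synchronized with the vertex-degree defect $\sum(3-d(x))$, which is precisely what the definition of $\Phi$ is engineered to balance.
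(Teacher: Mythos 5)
Your proposal is correct and takes essentially the same approach as the paper: (S1), (S2), (S4) are immediate, and (S3) is verified by tracking how deleting edges affects the three terms of the inequality. The paper does this bookkeeping in one global step, partitioning the deleted edges incident to $V(H)$ into the set $A_1$ of boundary edges and the set $A_2$ of chords, noting $\sum_{x\in V(H)}(3-d_{G'}(x))-\sum_{x\in V(H)}(3-d_{G}(x))=|A_1|+2|A_2|$ and $|\mathcal{U}_G(H)|-|\mathcal{U}_{G'}(H)|\leq |A_2|$; you do the identical accounting one edge at a time. Both are sound and the content is the same.
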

\begin{proof}
 Let $G'$ be an arbitrary signed subgraph of $G$. Obviously, $G'$ satisfies (S1), (S2) and (S4). We will show that $G'$ satisfies (S3). 
 
 Let $H$ be a balanced and connected signed subgraph of $G'$ with $|V(H)|\ge 2$.
Let $A_1=\delta_G(V(H))\setminus \delta_{G'}(V(H))$ and $A_2={\cal C}_G(H)\setminus {\cal C}_{G'}(H)$. Then
$$\sum_{x\in V(H)}(3-d_{G'}(x))-\sum_{x\in V(H)}(3-d_{G}(x))=|A_1|+2|A_2|.$$ Since   ${\cal U}_{G'}(H) \subseteq {\cal U}_G(H)$,  we have
$$|{\cal U}_G(H)|-|{\cal U}_{G'}(H)|\leq |A_2|.$$
Since $G$ is a shrubbery, 
 $$|\delta_{G'}(V(H))|+\sum_{x\in V(H)}(3-d_{G'}(x))+2|\mathcal{U}_{G'}(H)|\ge |\delta_G(V(H))|+\sum_{x\in V(H)}(3-d_G(x))+2|\mathcal{U}_G(H)|\ge 4.$$
 
 Therefore $G'$ satisfies (S3) and thus is a shrubbery.
  \end{proof}

Proposition~\ref{PROP: inheritance} will be applied frequently in the proof of Lemma~\ref{lm: water} and thus it will not be referenced explicitly.

The following two theorems and Lemma \ref{lm: flow extention} will be applied to reduce Theorem \ref{thm: balanced-6-flow}.

\begin{theorem}{\rm (\cite{Seymour81})}\label{6-flow}
Every ordinary bridgeless graph admits a $6$-NZF.
\end{theorem}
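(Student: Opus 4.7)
The plan is to prove Seymour's theorem via nowhere-zero $\mathbb{Z}_2\times\mathbb{Z}_3$-flows, in the same spirit as Theorem~\ref{thm: balanced-6-flow} of this paper. By the Chinese Remainder Theorem $\mathbb{Z}_6\cong\mathbb{Z}_2\times\mathbb{Z}_3$, a nowhere-zero $6$-flow on $G$ is equivalent to a pair $(\phi_2,\phi_3)$ consisting of a $\mathbb{Z}_2$-flow and a $\mathbb{Z}_3$-flow with $\mathrm{supp}(\phi_2)\cup\mathrm{supp}(\phi_3)=E(G)$. Since $\mathbb{Z}_2$-flows are exactly the indicator functions of even (Eulerian) subgraphs, the task reduces to finding an even subgraph $H\subseteq G$ together with a $\mathbb{Z}_3$-flow on $G$ whose support contains $E(G)\setminus E(H)$; equivalently, a nowhere-zero $\mathbb{Z}_3$-flow on the contraction $G/E(H)$.

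I would argue by induction on $|V(G)|+|E(G)|$. Routine reductions let me assume $G$ is $3$-edge-connected and loopless: bridges are ruled out by hypothesis; $2$-edge-cuts can be handled by contracting one edge of the cut, applying induction to the smaller bridgeless graph, and lifting the resulting flow back; and degree-$2$ vertices can be suppressed without affecting the flow problem. The central constructive step is then to exhibit an even subgraph $H$ of $G$ such that $G/E(H)$ is still bridgeless and strictly smaller than $G$. Given such an $H$, the inductive hypothesis supplies a nowhere-zero $\mathbb{Z}_2\times\mathbb{Z}_3$-flow on $G/E(H)$; its $\mathbb{Z}_3$-component pulls back to a $\mathbb{Z}_3$-flow on $G$ that is nonzero on $E(G)\setminus E(H)$, and pairing it with the $\mathbb{Z}_2$-flow that is $1$ on $E(H)$ and $0$ elsewhere produces the desired $\mathbb{Z}_2\times\mathbb{Z}_3$-NZF on $G$, which by CRT is a $6$-NZF.

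The main obstacle is producing the even subgraph $H$ whose contraction remains bridgeless. Seymour's key combinatorial insight is that in any $3$-edge-connected graph one can find a connected even subgraph (indeed, one containing any prescribed edge) whose contraction introduces no new bridges. I would establish this by an ear-decomposition argument: start from any circuit $C_0$ through a chosen edge $e_0$, and grow $H$ by successively attaching circuits that pass through whatever low-degree vertices appear in the current contracted graph, choosing at each step a circuit that eliminates any $1$-edge-cut the previous contraction created. The delicate part is the case analysis showing that whenever a would-be bridge appears in $G/E(H)$, one can locally modify $H$ — typically by rerouting the most recently attached ear through an alternative path guaranteed by $3$-edge-connectivity — to eliminate the bridge while preserving evenness. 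Once this structural lemma is in hand, the induction closes cleanly and the theorem follows.
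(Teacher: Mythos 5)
The paper does not prove Theorem~\ref{6-flow}; it simply cites Seymour's original paper, so there is no in-paper argument to compare against. Evaluating your proposal on its own terms, there is a genuine gap at the point where the induction is supposed to close. You apply the inductive hypothesis to obtain a nowhere-zero $\mathbb{Z}_2\times\mathbb{Z}_3$-flow $(\phi_2',\phi_3')$ on $G/E(H)$ and then claim that ``its $\mathbb{Z}_3$-component pulls back to a $\mathbb{Z}_3$-flow on $G$ that is nonzero on $E(G)\setminus E(H)$.'' But the $\mathbb{Z}_3$-component $\phi_3'$ of a $\mathbb{Z}_2\times\mathbb{Z}_3$-NZF is \emph{not} nowhere-zero: the nowhere-zero condition only guarantees that $(\phi_2'(e),\phi_3'(e))\neq(0,0)$ for each $e$, so $\phi_3'$ can (and in general will) vanish on any edge where $\phi_2'$ is nonzero. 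Consequently the pulled-back $\mathbb{Z}_3$-flow, paired with the indicator $\mathbb{Z}_2$-flow of $E(H)$, is not nowhere-zero on $E(G)$, and the induction does not close. What you actually need is that $G/E(H)$ admits a nowhere-zero $\mathbb{Z}_3$-flow, which is strictly stronger than admitting a $\mathbb{Z}_2\times\mathbb{Z}_3$-NZF and is not delivered by the statement you are inducting on.

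This is precisely why Seymour's real argument does not take the form ``find an even $H$ with $G/E(H)$ bridgeless and recurse.'' Instead, after reducing to $3$-edge-connected graphs, he constructs a specific nested family of connected pieces, attaching each new piece by exactly two edges to the union of the previous ones; the union of these attaching pairs together with paths inside the pieces yields an even subgraph $H$ for which $G/E(H)$ is shown \emph{directly} (not by the $6$-flow induction) to admit a $\mathbb{Z}_3$-NZF. So the ``ear-decomposition'' intuition in your last paragraph is pointed in roughly the right direction, but the target of the construction must be a $\mathbb{Z}_3$-NZF on $G/E(H)$, not mere bridgelessness of $G/E(H)$, and the combinatorial lemma guaranteeing it is considerably more delicate than ``rerouting the most recent ear.'' If you want to see a $\mathbb{Z}_2\times\mathbb{Z}_3$ argument that is genuinely inductive and self-contained, the paper's own Lemma~\ref{lm: water} (nowhere-zero waterings of shrubberies) is the relevant model, but note that it avoids exactly this pitfall by carrying a strictly stronger invariant through the induction.
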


\begin{theorem}{\rm (\cite{Tutte54})}\label{group-flow}
Let $A$ be an abelian group of order $k$. Then an ordinary graph admits a $k$-NZF if and only if it admits an $A$-NZF.
\end{theorem}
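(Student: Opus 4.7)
\medskip
\noindent
\textbf{Proof proposal.}
The plan is to prove both implications by passing through the equivalent statement for $\mathbb{Z}_k$. First, I would show that the existence of an $A$-NZF depends only on $|A|$, so it coincides with the existence of a $\mathbb{Z}_k$-NZF. Then I would show that a $\mathbb{Z}_k$-NZF can be lifted to an integer $k$-NZF; the reverse implication is immediate via reduction modulo $k$.

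For the first step, fix an orientation of $G$ and let $F(G, A)$ denote the number of $A$-NZFs of $G$ under this orientation. The standard deletion-contraction identity $F(G, A) = F(G/e, A) - F(G \setminus e, A)$ holds for every non-loop edge $e$, with loops handled as a separate base case. Induction on $|E(G)|$ shows that $F(G, A)$ is a polynomial in $|A|$, so its positivity depends only on $|A|$; hence $G$ admits an $A$-NZF if and only if it admits a $\mathbb{Z}_k$-NZF.

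For the lifting step, let $f : E(G) \to \{1, \ldots, k-1\}$ be an integer representative of a $\mathbb{Z}_k$-NZF, so that $\partial f(v)$ is divisible by $k$ at every vertex. The goal is to find $x : E(G) \to \{0, 1\}$ such that $g = f - kx$ is an integer flow; since $f(e) \in \{1, \ldots, k-1\}$ and $x(e) \in \{0, 1\}$, the resulting $g$ automatically takes values in $\{-(k-1), \ldots, -1, 1, \ldots, k-1\}$ and is the desired $k$-NZF. The flow condition $\partial g = 0$ is equivalent to the integer divergence condition $\partial x(v) = \partial f(v)/k$, whose values sum to zero over $V(G)$.

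The main obstacle is exhibiting such an $x$. I would invoke Hoffman's integer circulation theorem on the underlying oriented graph with lower bound $0$ and upper bound $1$, which reduces the existence of $x$ to verifying that $\sum_{v \in S} \partial f(v)/k$ does not exceed the number of edges directed out of $S$, for every $S \subseteq V(G)$. Since $f(e) \leq k-1$ on every edge, the left side is bounded by $\frac{k-1}{k}$ times the out-cut size, which is strictly less than the out-cut size whenever it is nonempty. Thus the inequality holds in every case, Hoffman's theorem yields the desired integer $x$, and $g = f - kx$ is the required integer $k$-NZF.
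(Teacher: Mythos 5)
The paper states this theorem of Tutte without proof, so there is no ``paper's proof'' to compare against; I can only judge correctness. Your argument is correct. The first step (flow polynomial via deletion-contraction to show the count of $A$-NZFs depends only on $|A|$) is standard and matches the spirit of Lemma~\ref{Seymour}, which the paper proves by exactly this deletion-contraction recursion. Your second step is the genuinely nontrivial one, and your bookkeeping is right: with $f\colon E(G)\to\{1,\dots,k-1\}$ a representative of a $\mathbb{Z}_k$-NZF, the demand $b(v)=\partial f(v)/k$ is an integer, $\sum_v b(v)=0$, and for any $S\subseteq V(G)$ one has $\sum_{v\in S} b(v)=\tfrac{1}{k}\bigl(\sum_{e\in\delta^+(S)}f(e)-\sum_{e\in\delta^-(S)}f(e)\bigr)\le\tfrac{k-1}{k}|\delta^+(S)|$, which is $\le|\delta^+(S)|$ (with equality only when both sides are $0$). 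This is the Hoffman feasibility condition with lower bound $0$ and upper bound $1$ after the routine reduction of a demand problem to a circulation, so an integral $x\in\{0,1\}^{E}$ with $\partial x=b$ exists, and $g=f-kx$ is a $k$-NZF as you claim.

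Worth noting: the classical lifting argument (Tutte's original, and the one most texts give) avoids Hoffman's theorem. One picks an integer representative $f$ with $f(e)\in\{\pm 1,\dots,\pm(k-1)\}$ and $\partial f(v)\equiv 0\pmod k$ that minimizes $\sum_v|\partial f(v)|$, and shows by a reachability/exchange argument along directed paths that this minimum is $0$: if some $\partial f(v)>0$, one finds a path to a vertex $u$ with $\partial f(u)<0$ along which adding or subtracting $k$ stays in range and strictly decreases the objective. Your Hoffman-based proof is a valid and somewhat cleaner packaging of the same idea (the cut inequality you verify is exactly what makes the exchange argument terminate), at the cost of invoking a larger black box; the classical route is more self-contained. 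Either is acceptable.
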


Let $G$ be an ordinary oriented graph,  $T\subseteq E(G)$ and $A$ be an abelian group.
For any function $\gamma: T\to A$,  let ${\cal F}_{\gamma}(G)$ denote the number of $A$-NZF $\phi$
of G with $\phi(e)=\gamma(e)$ for every $e\in T$. For every $X\subseteq V(G)$, let $\alpha_X: E(G)\to \{-1,0,1\}$ be
given by the rule
$$
\alpha_X(e)=\left\{
\begin{array}{rl}
1 & \mbox{if $e\in \delta_G(X)$ is directed toward $X$}\\
-1 & \mbox{if $e\in \delta_G(X)$ is directed away $X$}\\
0 & \mbox{otherwise}.
\end{array}
\right.
$$
For any two functions $\gamma_1, \gamma_2$ from $T$ to $A$, we call $\gamma_1, \gamma_2$ {\em similar} if for every $X\subseteq V(G)$, the following holds
$$
\sum_{e\in T}\alpha_X(e)\gamma_1(e)=0 \mbox{ if and only if } \sum_{e\in T}\alpha_X(e)\gamma_2(e)=0.
$$

\begin{lemma}\label{Seymour}
(Seymour - Personal communication). Let $G$ be an ordinary oriented graph,  $T\subseteq E(G)$ and $A$ be an abelian group. If the two functions $\gamma_1, \gamma_2: T\to A$ are similar, then ${\cal F}_{\gamma_1}(G)={\cal F}_{\gamma_2}(G)$.
\end{lemma}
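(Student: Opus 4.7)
The plan is to apply inclusion--exclusion to the nowhere-zero constraint. For a subset $S\subseteq E(G)\setminus T$, let $N_\gamma(G,S)$ denote the number of $A$-flow extensions $\phi$ of $\gamma$ (not required to be nowhere-zero) satisfying $\phi(e)=0$ for every $e\in S$. A standard M\"obius argument on the lattice of subsets of $E(G)\setminus T$ then gives
$$\mathcal{F}_\gamma(G)=\sum_{S\subseteq E(G)\setminus T}(-1)^{|S|}\,N_\gamma(G,S),$$
so it suffices to prove $N_{\gamma_1}(G,S)=N_{\gamma_2}(G,S)$ for every such $S$.

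Fix $S$ and set $E'=E(G)\setminus(T\cup S)$. Extending $\gamma$ to an $A$-flow which vanishes on $S$ is equivalent to choosing $\phi:E'\to A$ that at each vertex $v$ satisfies the inhomogeneous balance equation
$$\sum_{e\in E',\,v\in e}\alpha_v(e)\,\phi(e)=-\sum_{e\in T,\,v\in e}\alpha_v(e)\,\gamma(e).$$
The integer cycle space of $(V(G),E')$ is free abelian of rank $|E'|-|V(G)|+c(V(G),E')$, so via the Smith normal form of the integer boundary matrix the above system is either inconsistent, in which case $N_\gamma(G,S)=0$, or admits exactly $|A|^{|E'|-|V(G)|+c(V(G),E')}$ solutions. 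Moreover consistency can be tested component-by-component: summing the vertex equations over a connected component $C$ of $(V(G),E')$ makes the $E'$-contributions collapse (each $E'$-edge has both ends in $C$ and contributes $+1$ and $-1$), while the $T$-edges with both ends in $C$ also cancel in pairs, so consistency on $C$ reduces to the single equation $\sum_{e\in T}\alpha_C(e)\,\gamma(e)=0$.

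Since each such component $C$ is itself a subset of $V(G)$, the similarity hypothesis guarantees that $\sum_{e\in T}\alpha_C(e)\,\gamma_1(e)=0$ exactly when $\sum_{e\in T}\alpha_C(e)\,\gamma_2(e)=0$. Hence the extension system is consistent for $\gamma_1$ precisely when it is consistent for $\gamma_2$, and whenever consistent the solution count $|A|^{|E'|-|V(G)|+c(V(G),E')}$ depends only on $G$, $T$ and $S$, not on $\gamma$. This yields $N_{\gamma_1}(G,S)=N_{\gamma_2}(G,S)$ for every $S$, and the M\"obius formula then produces $\mathcal{F}_{\gamma_1}(G)=\mathcal{F}_{\gamma_2}(G)$. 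The only delicate point I anticipate is making the linear-algebra counting rigorous for an arbitrary abelian group $A$; this is handled by the Smith normal form of the integer boundary matrix, which controls the image and kernel of $\partial_{E'}$ in a way that is stable under base change to $A$, so both the dichotomy between inconsistency and $|A|^{\cdot}$-many solutions and the component-wise consistency test go through verbatim.
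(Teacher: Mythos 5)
Your argument is correct, but it takes a genuinely different route from the paper's. The paper proves the lemma by induction on $|E(G)\setminus T|$ via deletion--contraction: for an edge $e\notin T$, if $e$ is a bridge both counts vanish, if $e$ is a loop the counts equal $(|A|-1)$ times those of $G-e$, and otherwise $\mathcal{F}_{\gamma_i}(G)=\mathcal{F}_{\gamma_i}(G/e)-\mathcal{F}_{\gamma_i}(G-e)$; the base case $T=E(G)$ is disposed of by noting $\mathcal{F}_{\gamma_i}(G)\le 1$ with equality iff $\gamma_i$ is itself an $A$-NZF, which similarity is meant to control. You instead replace the recursion with a closed-form count: inclusion--exclusion over the zero set $S\subseteq E(G)\setminus T$ reduces the nowhere-zero requirement to counting arbitrary flow extensions vanishing on $S$, and the Smith-normal-form (equivalently, total unimodularity of the incidence matrix) argument shows each such count is either $0$ or $|A|^{|E'|-|V(G)|+c}$, with consistency governed exactly by the component equations $\sum_{e\in T}\alpha_C(e)\gamma(e)=0$. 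This makes transparent \emph{why} similarity is the right equivalence: it tests, simultaneously for every possible zero set $S$, the solvability of all the induced component systems. The paper's induction is shorter and avoids integral linear algebra over an arbitrary abelian group; your version is structurally more revealing and yields a formula rather than a recursion. One caveat shared by both arguments (and so not a defect of yours in particular): the identity $\mathcal{F}_\gamma(G)=\sum_S(-1)^{|S|}N_\gamma(G,S)$, like the paper's base-case reasoning, really requires $\gamma$ to be nowhere zero on $T$, since otherwise $\mathcal{F}_\gamma(G)=0$ while the right-hand side counts extensions that are nonzero only off $T$; the lemma is only ever invoked through Lemma~\ref{lm: flow extention}, where $\gamma$ maps into $A\setminus\{0\}$, so the application is safe.
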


\begin{proof}
We proceed by induction on the number of edges in $E(G)\setminus T$. If this set is empty,
then ${\cal F}_{\gamma_i}(G)\leq 1$ and ${\cal F}_{\gamma_i}(G)=1$ if and only if $\gamma_i$ is an $A$-NZF of $G$ for $i=1,2$. Thus, the result
follows by the assumption. Otherwise, choose an edge $e\in E(G)\setminus T$. If $e$ is a cut-edge,  then
${\cal F}_{\gamma_i}(G)=0$ for $i=1,2$. If $e$ is a loop, then we have inductively that
$$
{\cal F}_{\gamma_1}(G)=(|A|-1){\cal F}_{\gamma_1}(G-e)=(|A|-1){\cal F}_{\gamma_2}(G-e)={\cal F}_{\gamma_2}(G).
$$
Otherwise, applying induction to $G-e$ and $G/e$ we have
$$
{\cal F}_{\gamma_1}(G)={\cal F}_{\gamma_1}(G/e)-{\cal F}_{\gamma_1}(G-e)={\cal F}_{\gamma_2}(G/e)-{\cal F}_{\gamma_2}(G-e)={\cal F}_{\gamma_2}(G).
$$
\end{proof}

The following lemma directly follows from Lemma \ref{Seymour}.

\begin{lemma}
\label{lm: flow extention}
  Let $G$ be an ordinary oriented graph and $A$ be  an abelian group. Assume that $G$ has an $A$-NZF. If $G$ has a
  vertex $v$ with $d_G(v)\leq 3$ and $\gamma: \delta_G(v)\to A \setminus \{0\}$ satisfies $\partial \gamma (v)=0$, then
  there exists an $A$-NZF $\phi$ such that $\phi|_{\delta_G(v)}=\gamma$.
\end{lemma}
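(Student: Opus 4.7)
The plan is to apply Lemma~\ref{Seymour} directly, with $T=\delta_G(v)$. Let $\phi_0$ denote the hypothesized $A$-NZF of $G$ and set $\gamma':=\phi_0|_{T}$; then $\mathcal{F}_{\gamma'}(G)\geq 1$ because $\phi_0$ itself extends $\gamma'$. The task reduces to verifying that $\gamma$ and $\gamma'$ are similar in the sense defined just before Lemma~\ref{Seymour}; that lemma then gives $\mathcal{F}_{\gamma}(G)=\mathcal{F}_{\gamma'}(G)\geq 1$, producing the desired NZF $\phi$ extending $\gamma$.

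To check similarity, fix $X\subseteq V(G)$. Replacing $X$ by $V(G)\setminus X$ if necessary (this merely negates $\alpha_X$ and so preserves the zero/nonzero status of each sum), I may assume $v\in X$. For every $e\in\delta_G(v)$ with opposite endpoint $u$, one has $\alpha_X(e)=0$ when $u\in X$ and otherwise $\alpha_X(e)=-\sigma_v(e)$, where $\sigma_v(e)\in\{+1,-1\}$ records whether $e$ is directed out of or into $v$. Writing $S:=\{e\in\delta_G(v):u\notin X\}$, the sum appearing in the similarity definition becomes
$$\sum_{e\in T}\alpha_X(e)\eta(e)=-\sum_{e\in S}\sigma_v(e)\eta(e) \qquad \text{for } \eta\in\{\gamma,\gamma'\}.$$

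The key observation, which is where the hypothesis $d_G(v)\leq 3$ enters, is that for each $\eta\in\{\gamma,\gamma'\}$ the partial sum $\sum_{e\in S}\sigma_v(e)\eta(e)$ vanishes exactly when $S=\emptyset$ or $S=\delta_G(v)$. The two extreme cases are immediate ($S=\emptyset$ trivially, and $S=\delta_G(v)$ because $\partial\eta(v)=0$, which holds by hypothesis for $\gamma$ and because $\phi_0$ is a flow for $\gamma'$). For any proper nonempty $S$, since $|\delta_G(v)|\leq 3$ either $S$ or $\delta_G(v)\setminus S$ is a singleton $\{e_0\}$; using $\partial\eta(v)=0$, the partial sum over $S$ collapses to $\pm\sigma_v(e_0)\eta(e_0)\neq 0$, because $\eta$ is nowhere zero on $T$. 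This vanishing criterion is identical for $\gamma$ and $\gamma'$, so similarity holds.

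The main (indeed the only) obstacle is the bookkeeping above, together with confirming that the degree bound $d_G(v)\leq 3$ is doing genuine work: with $d_G(v)\geq 4$ one could partition $\delta_G(v)$ into two blocks of size $\geq 2$ whose $\sigma_v$-weighted sums cancel independently of the overall boundary relation, and the characterization of zero partial sums would fail, so the argument does not extend beyond the stated bound.
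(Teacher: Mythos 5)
Your proposal is correct and follows precisely the paper's approach: take the given $A$-NZF $\phi_0$, set $\gamma' = \phi_0|_{\delta_G(v)}$, verify that $\gamma$ and $\gamma'$ are similar, and invoke Lemma~\ref{Seymour}. The paper asserts similarity in a single sentence, whereas you fill in the bookkeeping (reduction to $v\in X$, identification of $\alpha_X$ with $-\sigma_v$ on $S$, the exhaustive vanishing analysis for $|S|\leq 1$ or $|\delta_G(v)\setminus S|\leq 1$) — this is exactly the calculation the paper's brevity leaves to the reader, and you correctly pinpoint where $d_G(v)\leq 3$ is used.
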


\begin{proof}
Let $f$ be an $A$-NZF of $G$.  Since $d_G(v)\leq 3$,  $f|_{\delta_G(v)}$ is similar to $\gamma$. Thus by Lemma \ref{Seymour}, we have  ${\cal F}_{\gamma}(G)={\cal F}_{f|_{\delta_G(v)}}(G)\neq 0$. Therefore there exists an $A$-NZF $\phi$ such that $\phi|_{\delta_G(v)}=\gamma$.
\end{proof}

Now we can reduce Theorem \ref{thm: balanced-6-flow}.

\begin{lemma}
\label{lm: reduction}
The following two statements are equivalent.
\begin{itemize}
\item[{\rm (i)}] Every flow-admissible signed graph admits a balanced $\mathbb{Z}_2\times \mathbb{Z}_3$-NZF.

\item[{\rm (ii)}] Every cubic shrubbery admits a balanced $\mathbb{Z}_2\times \mathbb{Z}_3$-NZF.
\end{itemize}
\end{lemma}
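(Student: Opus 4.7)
The direction (i)$\Rightarrow$(ii) is immediate: a cubic shrubbery $G$ is a cubic subgraph of itself and hence flow-admissible by axiom (S2), so by (i) it admits a balanced $\mathbb{Z}_2\times\mathbb{Z}_3$-NZF. For the converse, the plan is a minimum-counterexample argument. Assuming (ii), let $G$ be a flow-admissible signed graph that admits no balanced $\mathbb{Z}_2\times\mathbb{Z}_3$-NZF, chosen to minimize $|E(G)|$ and then $|V(G)|$. The goal is to show that $G$ must itself be a cubic shrubbery, contradicting (ii).

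First I would reduce $G$ to being cubic (and subcubic, giving (S1)). If $d_G(v)\ge 4$, apply Lemma~\ref{LE: splitting} within an appropriate block at $v$ to split off a pair of edges; the resulting signed graph is flow-admissible and strictly smaller, and any balanced $\mathbb{Z}_2\times\mathbb{Z}_3$-NZF on it lifts back across the split. Vertices of degree $\le 2$ are removed by suppressing (passing to $\overline G$, which preserves the flow property) or, in the presence of a pendant negative loop, by excising the loop and reinstalling it after solving the rest via Proposition~\ref{flow admissible}. For (S2), any cubic subgraph $H\subseteq G$ that fails to be flow-admissible either contains a balancing cut-edge or is switch-equivalent to a graph with a single negative edge; in either case one can locally simplify $G$ and invoke minimality. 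Axiom (S4) is enforced similarly: a hypothetical balanced $4$-circuit $C$ is contracted to yield a flow-admissible $G/C$ with fewer edges, and the balanced flow on $G/C$ extends across $C$ by Lemma~\ref{lm: flow extention}.

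The heart of the proof is axiom (S3). If a balanced connected subgraph $H$ with $|V(H)|\ge 2$ violates
\begin{equation*}
|\delta_G(V(H))|+\sum_{x\in V(H)}(3-d_G(x))+2|\mathcal{U}(H)|\ge 4,
\end{equation*}
then contract $H$ to a single vertex $v_H$ (converting each chord in $\mathcal{U}(H)$ into a negative loop at $v_H$) to obtain a flow-admissible signed graph $G^\ast$ in which $d_{G^\ast}(v_H)\le 3$. By minimality $G^\ast$ admits a balanced $\mathbb{Z}_2\times\mathbb{Z}_3$-NZF $\phi^\ast$. Since $H$ is balanced, a switching renders it an ordinary graph, so by Theorems~\ref{6-flow} and~\ref{group-flow} it admits a $\mathbb{Z}_2\times\mathbb{Z}_3$-NZF; then Lemma~\ref{lm: flow extention}, applicable because $v_H$ has degree at most $3$ in $G^\ast$, allows us to prescribe the boundary values dictated by $\phi^\ast$ and extend them to an NZF on $H$. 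Reassembling produces a balanced $\mathbb{Z}_2\times\mathbb{Z}_3$-NZF of $G$, a contradiction.

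The principal obstacle, I expect, is parity bookkeeping in the (S3) reduction: the ``balanced'' condition requires $\supp(f_1)$ to contain an even number of negative edges, and this parity must be tracked through the contraction $G\leadsto G^\ast$ and the subsequent re-expansion, especially when $\mathcal{U}(H)\neq\emptyset$ so that new negative loops are created at $v_H$. Aligning the parity on $H$ (after switching) with the parity prescribed by $\phi^\ast$ will require a careful choice of which of the multiple NZF extensions guaranteed by Lemma~\ref{lm: flow extention} to take, and may also force a strengthening of the contraction step into a case analysis on $|\delta_G(V(H))|$, $|\mathcal{U}(H)|$, and the set $V_2(G)\cap V(H)$ of low-degree vertices that appear in the (S3) slack term.
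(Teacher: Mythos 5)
Your high-level strategy is the paper's: assume (ii), take a minimum counterexample $G$ to (i), and argue that $G$ must be a cubic shrubbery, contradicting (ii). The reductions by vertex-splitting for (S1), contraction of a small-cut balanced subgraph plus Theorems~\ref{6-flow}/\ref{group-flow} and Lemma~\ref{lm: flow extention} for (S3), and contraction of a balanced $4$-circuit for (S4), all match the paper. However, your choice of minimality measure, ``$|E(G)|$ then $|V(G)|$,'' breaks the splitting step: $G_{[v;\{e_1,e_2\}]}$ has the same number of edges and one more vertex, and if the boundary at the new vertex $v^*$ is nonzero one must add an extra edge $vv^*$, which \emph{increases} $|E|$. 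So the graph produced is not strictly smaller in your order, and the induction does not close. The paper instead minimizes $\beta(G)=\sum_{v\in V(G)}|d_G(v)-2.5|$, which is engineered to drop both under splitting (with or without the auxiliary $vv^*$ edge) and under suppression of $2$-vertices.

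Two smaller points. Your paragraph on (S2) proposes a case analysis that is both vague and unnecessary: once $G$ is connected and cubic, the only cubic subgraph of $G$ is $G$ itself (any vertex of a cubic subgraph has all three of its $G$-edges inside the subgraph, so the subgraph is a union of components), and $G$ is flow-admissible by hypothesis, so (S2) is automatic. And in the (S3) contraction you must contract $E(H)\cup\mathcal{B}(H)$ (i.e., $G[V(H)]-\mathcal{U}(H)$), not just $E(H)$: if the balanced chords $\mathcal{B}(H)$ survive as loops at $v_H$, then $d(v_H)=|\delta_G(V(H))|+2|\mathcal{C}(H)|$ may exceed $3$ and Lemma~\ref{lm: flow extention} no longer applies. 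Finally, the parity bookkeeping you flag as ``the principal obstacle'' resolves cleanly in the paper's setup precisely because after switching $H'=G[V(H)]-\mathcal{U}(H)$ to all-positive edges, the auxiliary graph $G_2$ built from $H'$ and the incident half-edges is an \emph{ordinary} (all-positive) graph; its flow contributes nothing to the negative-edge parity of $\supp(f_1)$, so the parity of the lifted flow on $G$ equals the parity of the flow on the contracted graph $G_1$, which is balanced by the inductive choice.
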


\begin{proof}
(i) $\Rightarrow$ (ii). By (S2), every cubic shrubbery is flow-admissible, and thus (ii) follows  from (i).

(ii) $\Rightarrow$ (i).  Let $G$ be a counterexample to (i) with
 $\beta (G)=\sum_{v\in V(G)} |d_G(v)-2.5|$ minimum. Since $G$ is flow-admissible, it admits a $k$-NZF $(\tau,f)$ for some positive integer $k$ and thus $V_1(G)=\emptyset$.  Furthermore, by  the minimality of $\beta(G)$,  $G$ is connected  and $V_2(G)=\emptyset$  otherwise the suppressed signed graph $\overline{G}$ of $G$ is also flow-admissible and has smaller $\beta(\overline{G})$ than $\beta(G)$.
  We are going to show that $G$ is a cubic shrubbery  and thus  admits a balanced $\mathbb{Z}_2\times \mathbb{Z}_3$-NZF by (ii),  which is a contradiction to the fact that $G$ is a counterexample.   By the definition of shrubberies, we only need to prove (I)-(III) in the following.

 \medskip \noindent
  {\bf  I.} $G$ is cubic.

Suppose to the contrary that $G$ has a vertex $v$ with $d_G(v)\neq 3$. Then $d_G(v) \geq 4$.
Let  $\{e_1,e_2\} \subset \delta_G(v)$ and let $G'=G_{[v; \{e_1,e_2\}]}$.  Denote the new common end of $e_{1}$ and $e_{2}$ in $G'$ by $v^{*}$.
If $\partial f(v^*)=0$,  let $G''=G'$. If $\partial f(v^*)\neq 0$,  we further add a positive edge $vv^{*}$ with the direction from $v$ to $v^*$ and assign $vv^*$ with the weight $\partial f(v^*)$. Let $G''$ be the resulting signed graph. In both cases, $G''$ is flow-admissible and $\beta(G'')<\beta(G)$. By the minimality of $\beta(G)$, $G''$ admits a balanced $\mathbb{Z}_2\times \mathbb{Z}_3$-NZF, and thus so does $G$, a contradiction. This proves I.

  \medskip \noindent
  {\bf  II.} $|\delta_G(V(H))|+2|\mathcal{U}(H)|\geq 4$ for any balanced and connected signed subgraph $H$ with $|V(H)|\geq 2$.

Suppose to the contrary that $H$ is
 such a subgraph with $|\delta_G(V(H))|+2|\mathcal{U}(H)|\leq 3$. Let $X=V(H)$. Then $H'=G[X]-\mathcal{U}(H)$ is a balanced and connected signed subgraph of $G$. WLOG assume that all edges of $H'
$ are positive. Let $G_1=G/E(H')$. Then  $G_1$ is also flow-admissible.

Since $|\delta_G(X)|+2|\mathcal{U}(H)|\leq 3$, it follows from the choice of $G$ and Proposition~\ref{flow admissible} that  either $|\mathcal{U}(H)|=0$ and $ |\delta_G(X)|\in \{2, 3\}$ or $|\mathcal{U}(H)|=1$ and $|\delta_G(X)|=1$.  Let $x$ be the contracted vertex in $G_1$ corresponding to $E(H')$. Then $d_{G_1}(x)=|\delta_G(X)|+2|\mathcal{U}(H)|\in \{2, 3\}$ and $\beta(G_1)<\beta(G)$ since $|X|=|V(H)|\ge 2$.  By the minimality of $\beta(G)$, $G_1$ admits a balanced $\mathbb Z_{2} \times \mathbb Z_{3}$-NZF $(\tau_1, f_1)$, where $\tau_1$ is the restriction of $\tau$ on $G_1$.

 Let $H_X$ be the set of the half edges of each edge in $\delta_G(X)\cup \mathcal{U}(H)$ whose end is in $X$. Then $|H_X|=|\delta_G(X)|+2|\mathcal{U}(H)|=2$ or $3$. We add a new vertex $y$ to $H'+H_X$ such that $y$ is the common end of all $h\in H_X$, and denote the new graph by $G_2$. Since $G$ is flow-admissible, $G_2$ is a bridgeless ordinary graph and thus admits a balanced $\mathbb{Z}_2\times \mathbb{Z}_3$-NZF by Theorems \ref{6-flow} and \ref{group-flow}.
Let $\tau_2$ be the restriction of $\tau$ on $G_2$ and define $\gamma (h) = f_1(e_h)$ for each $h\in H_X$. Note that $\tau_2(h)=\tau_1(h)$ for each $h\in H_X$.  Since $(\tau_1,f_1)$ is a balanced $\mathbb{Z}_2\times \mathbb{Z}_3$-NZF of $G_1$, we have  $\partial \gamma (y) =-\partial f_1(x)= 0$. 
By Lemma~\ref{lm: flow extention}, there is a balanced $\mathbb{Z}_2\times \mathbb{Z}_3$-NZF $(\tau_2,f_2)$ of $G_2$ such that $f_2|_{\delta_{G_2}(y)}=\gamma=f_1|_{\delta_{G_1}(x)}$. Thus $(\tau_1,f_1)$ can be extended to a balanced $\mathbb{Z}_2\times \mathbb{Z}_3$-NZF of $G$, a contradiction.

This proves  (II).

\medskip \noindent
 {\bf  III.} $G$ has no balanced $4$-circuits.

Suppose to the contrary that $G$ has a balanced $4$-circuit $C$.  Then we may assume that all edges of $C$ are positive. Let $G'=G/E(C)$. Then $\beta(G')<\beta(G)$. By the minimality of $\beta(G)$, $G'$ admits a balanced $\mathbb Z_{2} \times \mathbb Z_{3}$-NZF, say $(f_1', f_2')$. Since $C$ is a circuit with all positive edges and $|E(C)| = 4$ and since $|\mathbb Z_{2} \times \mathbb Z_{3}|=6$, it is easy to extend  $(f_1', f_2')$ to a
  balanced $\mathbb{Z}_2\times \mathbb{Z}_3$-NZF  of $G$, a contradiction.
  This proves (III) and thus completes the proof of the lemma.
\end{proof}

\subsection{Nowhere-zero watering}

In this subsection, we will prove that every shrubbery admits a nowhere-zero watering (Lemma \ref{lm: water}). We need some preparations.

\begin{theorem}\label{th: 1-negative}
{\rm (\cite{MW1966})}
Let $G$ be a $2$-connected graph with $\Delta(G)\leq 3$ and let $y_1, y_2, y_3\in V(G)$. Then either there exists a circuit of $G$ containing $y_1, y_2, y_3$, or there is a partition of $V(G)$ into $\{X_1, X_2, Y_1, Y_2, Y_3\}$ with the following properties:
\begin{itemize}
\item[\rm (1)] $y_i\in Y_i$ for $i=1,2,3$;
\item[\rm (2)] $\delta_G(X_1, X_2)=\delta_G(Y_i, Y_j)=\emptyset$ for $1\leq i < j \leq 3$;
\item[\rm (3)] $|\delta_G(X_i, Y_j)|=1$ for $i=1,2$ and $j=1,2,3$.
\end{itemize}
\end{theorem}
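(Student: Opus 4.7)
My plan is to argue by contradiction: assume no cycle of $G$ contains all of $\{y_1,y_2,y_3\}$ and uncover the five-part partition promised by the conclusion. By Whitney's theorem (a special case of Menger's), the $2$-connectedness of $G$ yields a cycle $C$ through $y_1$ and $y_2$; among all such cycles I would choose one with $|V(C)|$ maximum. Since $y_3 \notin V(C)$ by assumption, applying Menger's theorem to the source $y_3$ and the sink $V(C)$ produces two internally disjoint paths $Q_1,Q_2$ from $y_3$ to distinct vertices $u_1,u_2$ on $C$.

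Next, a routing argument forces $u_1$ and $u_2$ to lie on a common arc of $C$ between $y_1$ and $y_2$: otherwise, splicing $Q_1\cup Q_2$ in place of one of the arcs of $C$ produces a cycle through $y_1,y_2,y_3$, contradicting the hypothesis. Thus $C$ splits naturally into an arc $A$ from $y_1$ to $y_2$ containing both $u_i$ and an arc $A'$ from $y_1$ to $y_2$ containing neither, and together with the fan $u_1 Q_1 y_3 Q_2 u_2$ this already provides a $K_{2,3}$-subdivision with branch vertices $\{y_1,y_2\}$ on one side and $\{u_1,y_3,u_2\}$ on the other.

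To extract the partition itself, I would delete the six skeleton edges incident with the branch vertices and let $Y_1,Y_2,Y_3$ be the components containing $y_1,y_2,y_3$, and $X_1,X_2$ the components containing the interiors of $A$ and $A'$. The degree bound $\Delta(G)\leq 3$ is essential here: it forces each $y_i$ to have at most one neighbor inside its own $Y_i$ and constrains the local branching at $u_1,u_2$, so the candidate partition is well-defined and (1) is immediate. Conditions (2) and (3) then reduce to checking that the only edges between the five parts are the six skeleton edges that built $K_{2,3}$.

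The main obstacle, and where I expect the real work to lie, is eliminating additional crossing edges between the five prospective parts. This is where the maximality of $|V(C)|$ meets $\Delta(G)\leq 3$: any putative extra edge joining two parts can, via a short case analysis on its endpoints, be used either to reroute and enlarge $C$ (contradicting maximality) or to splice $y_3$ onto a cycle already containing $y_1,y_2$ (contradicting the no-cycle assumption). The degree-three restriction is what keeps the case analysis finite, because it caps at three the number of edges leaving any vertex and therefore the number of reroutings one must consider before reaching the contradiction.
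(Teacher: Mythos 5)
The paper does not prove Theorem~\ref{th: 1-negative}; it quotes the result from Mesner and Watkins~\cite{MW1966} as a black box, so there is no in-paper argument for your sketch to be measured against, and the proposal has to stand entirely on its own.

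The opening moves are sound: an extremal circuit $C$ through $y_1,y_2$, a two-path fan $Q_1,Q_2$ from $y_3$ to $C$, and the routing argument forcing $u_1,u_2$ onto a single $(y_1,y_2)$-arc $A$. But the subgraph $A\cup A'\cup Q_1\cup Q_2$ you build is a theta whose degree-$3$ vertices are $u_1$ and $u_2$ (not $y_1,y_2$), and whose three $u_1$--$u_2$ branches are the inner piece of $A$ between $u_1$ and $u_2$, the path $Q_1\cup Q_2$ through $y_3$, and the path through \emph{both} $y_1$ and $y_2$ (the outer pieces of $A$ together with $A'$). In the partition the theorem promises, the contracted picture is a $K_{2,3}$ with $X_1,X_2$ in the degree-$3$ role and $Y_1,Y_2,Y_3$ in the degree-$2$ role, so $y_1$, $y_2$, $y_3$ must sit on three \emph{distinct} $X_1$--$X_2$ branches. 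Your theta puts $y_1$ and $y_2$ on the same branch and supplies no device to separate them, so the stated branch-vertex assignment (with $\{y_1,y_2\}$ on the size-two side) is inconsistent with the degrees, and deleting ``the six skeleton edges incident with the branch vertices'' does not produce the five parts: with $u_1,u_2$ as branch vertices, $y_1$ and $y_2$ stay joined through $A'$; with $y_1,y_2$ as branch vertices, $y_3$ stays joined to $u_1,u_2$. In neither reading is property (1) secured. Finally, the step you yourself identify as carrying the real weight --- excluding all further cross-edges so that (2) and (3) hold --- is deferred entirely to an unspecified case analysis, and the single extremal criterion $|V(C)|$ maximum is unlikely to suffice on its own; one typically also needs a secondary extremal choice controlling where the fan lands and how the arcs meet the rest of $G$. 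As written this is an outline with a mislabeled skeleton and an unexecuted core, not a proof.
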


Let $H$ be a contraction of $G$ and let $x\in V(G)$. We use $\hat{x}$ to denote the vertex in $H$ which $x$ is contracted into.

\begin{theorem}
\label{thm: linkage}
{\rm (\cite{Lu Luo Zhang})}
 Let $G$ be a 2-connected signed graph with  $|E_N(G)| = \epsilon(G)=k\ge 2$,
where $E_N(G)=\{x_1y_1, \dots, x_ky_k\}$. Then the following two statements are equivalent.
\begin{itemize}
\item[\rm (i)] $G$ contains no two edge-disjoint unbalanced circuits.

\item[\rm (ii)]  The graph $G$ can be contracted to a cubic
 graph $G'$
such that
either $G'-\{ \hat{x}_1 \hat{y}_1, \dots, \hat{x}_k\hat{y}_k \}$ is a $2k$-circuit  $C_1$ on the vertices $\hat{x}_1,  \dots, \hat{x}_k, \hat{y}_1, \dots, \hat{y}_k$ or  can be obtained from a $2$-connected cubic plane graph by selecting a facial circuit $C_2$ and inserting the  vertices
$\hat{x}_1,  \dots, \hat{x}_k, \hat{y}_1, \dots, \hat{y}_k$ on the edges of $C_2$ in such a way that
for every pair $\{ i, j \} \subseteq [1,k]$, the vertices $\hat{x_i}, \hat{x_j}, \hat{y_i}, \hat{y_j}$ are around the circuit $C_1$ or $C_2$ in this cyclic order.
\end{itemize}
\end{theorem}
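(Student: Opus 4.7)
\medskip
\noindent
\textbf{Proof plan for Theorem~\ref{thm: linkage}.}
The plan is to prove the two implications separately, expecting most of the work to go into the forward direction.

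For the reverse implication (ii) $\Rightarrow$ (i), I would argue that in either structural form the negative edges act as chords of a distinguished circuit (which is $C_1$ itself in the first case, and $C_2$ in the second, planar case), and that by hypothesis every pair of such chords interleaves cyclically. Any unbalanced circuit must traverse an odd number of these chords. In the planar case I would invoke the Jordan curve theorem on the plane embedding: two edge-disjoint unbalanced circuits, each using an odd subset of the interleaving chords of a facial circuit, are forced to share at least one edge of $C_2$, a contradiction. In the first case, where $G'$ is a $2k$-circuit plus $k$ pairwise crossing chords, a direct combinatorial check on how arcs of $C_1$ can be combined with odd subsets of chords shows that any two unbalanced circuits must share a common arc of $C_1$.

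For the forward implication (i) $\Rightarrow$ (ii) my plan is a reduction-plus-induction strategy. First I would reduce to a cubic graph $G'$ by suppressing $2$-vertices and contracting bridgeless signed subgraphs that contain no negative edge, verifying that neither 2-connectedness nor the ``no two edge-disjoint unbalanced circuits'' property is lost in the process. Then I would induct on $k$. For the base case $k=2$, I apply Theorem~\ref{th: 1-negative} to the graph $G'-\{\hat{x}_1\hat{y}_1,\hat{x}_2\hat{y}_2\}$ with three of the four endpoints distinguished: a circuit through all three extends uniquely, together with the two negative edges, to the desired $C_1$ in the crossing cyclic order, while the Menger-type partition alternative gives a $3$-edge-cut structure that I can show matches the planar conclusion. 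For the inductive step I would peel off a pair $(x_k,y_k)$ whose deletion preserves condition~(i), apply induction to the remaining signed graph to obtain a cyclic structure on $\hat{x}_1,\dots,\hat{y}_{k-1}$, and then reinsert $(x_k,y_k)$, using (i) against every candidate placement that would violate the interleaving cyclic order.

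The main obstacle I anticipate is in the planar second case of (ii): producing the combinatorial cyclic order is relatively routine from the inductive hypothesis and Theorem~\ref{th: 1-negative}, but certifying that $G'$ is actually planar with $C_2$ as a facial circuit requires ruling out Kuratowski minors. I expect to handle this by showing that any $K_5$- or $K_{3,3}$-minor attached to $C_2$ in a manner compatible with the negative edges can be ``uncrossed'' into two edge-disjoint unbalanced circuits, thereby contradicting~(i). The role of the switching-minimality hypothesis $|E_N(G)|=\epsilon(G)$ is to guarantee that the negative edges genuinely record the obstruction to balance and cannot be absorbed by a vertex switch, which is what forces the rigid pairing that appears in the conclusion.
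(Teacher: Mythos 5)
Theorem~\ref{thm: linkage} is quoted in this paper from~\cite{Lu Luo Zhang} and is stated without proof here; it is an external ingredient, not a result established in this manuscript. There is therefore no in-paper argument to compare your plan against, and the assessment below concerns only the internal soundness of what you propose.

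Even taken on its own terms, your plan has several points that would need substantial work before they constitute a proof. For the reverse implication you move from a property of the contracted cubic graph $G'$ back to $G$ without argument; you should check explicitly that if $G'$ has no two edge-disjoint unbalanced circuits then neither does $G$, since contraction was only applied to positive (balanced) pieces and the transfer is not completely automatic. In the first alternative of~(ii) (the $2k$-circuit plus $k$ pairwise crossing chords) you assert ``a direct combinatorial check'' that two unbalanced circuits must share an arc; that check is the heart of the implication and cannot be left implicit, particularly because a circuit using an odd number of chords need not be one of the obvious $C_1$-plus-one-chord circuits. For the forward direction, the base case $k=2$ via Theorem~\ref{th: 1-negative} is genuinely underdeterminate as written: you need all four endpoints $\hat{x}_1,\hat{y}_1,\hat{x}_2,\hat{y}_2$ to appear in the crossing cyclic order, and a circuit through three of them neither guarantees passage through the fourth nor controls the cyclic order, so the ``extends uniquely'' claim is not justified. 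In the inductive step, deleting the negative edge $x_ky_k$ does preserve~(i) and (with a short switching argument) the equality $|E_N|=\epsilon$, but it can destroy $2$-connectedness, and the ``reinsert and rule out bad placements'' step is the entire content of the induction and is left as a sketch. Finally, the proposed Kuratowski-minor argument for certifying planarity with $C_2$ facial---that any $K_5$- or $K_{3,3}$-minor can be ``uncrossed'' into two edge-disjoint unbalanced circuits---is a nontrivial structural claim in its own right and is precisely the kind of statement one would expect to occupy most of a published proof; as stated it is not self-evident and cannot be taken on faith.
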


\begin{lemma}
\label{lm: flow extention2}
{\rm (\cite{Jaeger1992})}
Let $G$ be an ordinary oriented graph and $A$ be  an abelian group. Then $G$ is connected if and only if for every function $\beta: V(G) \to A$ satisfying $\sum_{v\in V(G)} \beta(v) = 0$,  there exists $\phi: E(G) \to A$ such that $\partial \phi = \beta$.
\end{lemma}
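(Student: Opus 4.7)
The plan is to handle the two implications separately. For the easier direction, I would show the contrapositive: if $G$ is disconnected, then some zero-sum $\beta$ is not realizable. Write $V(G) = V_{1} \sqcup V_{2}$ with no edges between $V_{1}$ and $V_{2}$. For any $\phi \colon E(G) \to A$, the sum $\sum_{v \in V_{1}} \partial \phi(v)$ equals zero because each edge with both ends in $V_{1}$ contributes $+\phi(e)$ at one end and $-\phi(e)$ at the other, while no edge crosses the partition. Now pick any nonzero $a \in A$ and set $\beta(u) = a$ for some chosen $u \in V_{1}$, $\beta(w) = -a$ for some chosen $w \in V_{2}$, and $\beta \equiv 0$ otherwise; this yields $\sum_{V(G)} \beta = 0$ while $\sum_{V_{1}} \beta = a \ne 0$, ruling out any $\phi$ with $\partial \phi = \beta$.

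For the substantive direction, assume $G$ is connected. My plan is to fix a spanning tree $T$ of $G$, set $\phi(e) = 0$ on every chord $e \in E(G) \setminus E(T)$, and construct $\phi$ on $E(T)$ by a leaf-stripping induction on $|V(T)|$. Pick a leaf $v$ of $T$, let $e = uv$ be its unique tree-edge, and define $\phi(e)$ to be $\beta(v)$ or $-\beta(v)$ according to whether $e$ is oriented toward or away from $v$, so that $\partial \phi(v) = \beta(v)$. Then delete $v$ from $T$ and update the demand at $u$ to $\beta'(u) = \beta(u) + \beta(v)$, leaving $\beta'(x) = \beta(x)$ for all other $x$; a direct check (identical in the two orientation cases) shows this is exactly the correction needed so that any flow on $T - v$ realizing $\beta'$ extends, via the weight $\phi(e)$ just assigned, to a flow on $T$ realizing $\beta$. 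Moreover $\sum_{x \in V(T) - v} \beta'(x) = \sum_{x \in V(T)} \beta(x) = 0$, so the induction hypothesis applies, with base case a single vertex $x$ whose demand is necessarily $\beta'(x) = 0$ (so the empty assignment works).

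There is no serious obstacle here; the only care required is the sign bookkeeping in the inductive step, which must be arranged so that the updated boundary on $T - v$ still sums to zero and precisely compensates for the contribution of $\phi(e)$ at $u$. Once this is verified the construction is forced, and reading off $\phi$ on $E(T)$ together with the zero extension on chords yields the desired realization.
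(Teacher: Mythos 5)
Your proof is correct. The paper cites this fact from Jaeger, Linial, Payan, and Tarsi without giving a proof, so there is no in-paper argument to compare against; the spanning-tree, leaf-stripping construction you give for the forward direction is the standard one, and the residual-demand update $\beta'(u)=\beta(u)+\beta(v)$ is right under either orientation of the leaf edge $e=uv$, since the contribution of $\phi(e)$ to $\partial\phi(u)$ has sign opposite to its contribution to $\partial\phi(v)$.

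One small point worth making explicit: your converse direction requires a nonzero element of $A$, so it tacitly assumes $A$ is nontrivial; the biconditional as literally stated is vacuous on the right-hand side when $|A|=1$, regardless of connectivity, so that hypothesis should be recorded. With that understood the argument is airtight: the identity $\sum_{v\in V_1}\partial\phi(v)=0$ whenever $\delta_G(V_1)=\emptyset$ (loops also cancel) is the correct obstruction for the easy direction, and in the hard direction the invariant $\sum_x\beta'(x)=0$ is preserved at every step and forces the single-vertex base case to have demand $0$, closing the induction.
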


Let $G$ be a signed graph with an orientation. A \emph{nowhere-zero watering} (briefly, NZW) of $G$ is a mapping   $f: E(G)\to \mathbb Z_{2} \times \mathbb Z_{3}-\{(0, 0)\}$ such that
\[
\partial f(v) = (0,0) ~\mbox{if} ~ d_G(v) = 3 ~\mbox{and}~ \partial f(v) = (0,\pm 1) ~\mbox{if}~ d_G(v)  = 1,2.
\]
Similar to flows, the existence of an NZW is also an invariant under switching operation.

\begin{lemma}
\label{lm: remove circuit}
Let $G$ be a shrubbery  and let $C$ be a removable circuit of $G$.  Then for every NZW $f'=(f_1',f_2')$ of  $G-V(C)$, there exists an NZW $f=(f_1,f_2)$ of $G$ so that $f(e) = f'(e)$ for every $e\in E(G')$ and  $\supp(f_1)=\supp(f_1')\cup E(C)$.
\end{lemma}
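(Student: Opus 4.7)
The plan is to define $f = (f_1, f_2)$ on the ``new'' edge set $E^{\mathrm{new}} := E(C) \cup \mathcal{C}(C) \cup \delta_G(V(C))$ and glue to $f'$. For $f_1$ I would simply set $f_1 \equiv 1$ on $E(C)$ and $f_1 \equiv 0$ on $\mathcal{C}(C) \cup \delta_G(V(C))$, which immediately forces $\supp(f_1) = \supp(f_1') \cup E(C)$. The $\mathbb{Z}_2$-boundary conditions then hold automatically: at each $v \in V(C)$ the two $C$-edges sum to $0$ and the possible third edge contributes $0$; at each $u \notin V(C)$ the new boundary edges contribute $0$, so $\partial f_1(u)$ agrees with the $\mathbb{Z}_2$-boundary of $f_1'$ in $G - V(C)$, which vanishes since $f'$ is an NZW there.

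The substance is the construction of $f_2$, which must be nonzero on every chord and every boundary edge and satisfy $\partial f_2(v) = 0$ at every $3$-vertex and $\partial f_2(v) \in \{\pm 1\}$ at every $\le 2$-vertex of $G$. I would proceed in two stages. First, choose nonzero $\mathbb{Z}_3$-values on $\mathcal{C}(C) \cup \delta_G(V(C))$ that satisfy the NZW equations at vertices of $V(G')$; this is an affine system over $\mathbb{Z}_3$ with one or two equations per $u \in V(G') \cap N(V(C))$ and a $\pm 1$ freedom in the right-hand side whenever $d_G(u) \le 2$, and existence of nonzero simultaneous solutions follows from Lemma~\ref{lm: flow extention} applied to a suitable contraction of $G - V(C)$ (whose required $\mathbb{Z}_2 \times \mathbb{Z}_3$-NZF exists by Theorems~\ref{6-flow} and~\ref{group-flow}). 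Second, propagate these contributions around $C$: orienting $C = v_1 v_2 \cdots v_k v_1$ cyclically and parameterizing $f_2(e_1) = t \in \mathbb{Z}_3$, the constraint at each $v_{i+1}$ yields a recurrence $f_2(e_{i+1}) = \varepsilon_{i+1} f_2(e_i) + d_{i+1}$ with $\varepsilon_{i+1} \in \{\pm 1\}$ determined by local orientations and signs, and $d_{i+1}$ encoding the chord/boundary contributions at $v_{i+1}$ together with the $\pm 1$ choice whenever $v_{i+1}$ is a $2$-vertex. Telescoping and closure around $C$ collapse to the single equation $t = \sigma(C) \, t + S$ for an accumulated holonomy $S \in \mathbb{Z}_3$; hence for unbalanced $C$ the equation $2t = -S$ is uniquely solvable, while for balanced $C$ solvability becomes the condition $S = 0$.

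When $C$ is balanced, the removable hypothesis finishes the job: $|\mathcal{U}(C)| + |V_2(G) \cap V(C)| \geq 2$ supplies two independent ``tuning knobs'' for $S$, one per $2$-vertex of $V(C)$ (via its free $\pm 1$ boundary value) and one per unbalanced chord (via the choice between its two allowed nonzero weights). Two independent $\pm 1$-shifts span all of $\mathbb{Z}_3$, so $S$ can always be driven to $0$ without disturbing the nonzero and linear constraints already secured in the first stage. For unbalanced $C$, this additional adjustment is unnecessary.

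The main obstacle is the first stage: arranging nonzero assignments on $\mathcal{C}(C) \cup \delta_G(V(C))$ to meet the NZW equations at all adjacent $V(G')$-vertices while preserving enough flexibility for the second-stage holonomy correction in the balanced case. The shrubbery axioms (S3) and (S4) are the technical levers: (S3) bounds how many chords and boundary edges can concentrate around $V(C)$, keeping the boundary-edge linear system from becoming over-determined, and (S4) rules out the degenerate case of a balanced $4$-circuit with no chords and no low-degree vertices that could otherwise exhaust the available knobs before $S = 0$ is reached. The bulk of the detailed proof will be the case analysis verifying these claims for all removable circuits $C$.
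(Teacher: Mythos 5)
Your outline follows the same broad template as the paper's proof: put $f_1\equiv 1$ on $E(C)$ and $0$ on the new chords and boundary edges, choose $f_2$ on the boundary edges so the watering condition holds at the $V(G')$-side endpoints, and then solve for $f_2$ on $E(C)$ by a recurrence around $C$ whose closing ``holonomy'' equation is automatic when $C$ is unbalanced and becomes $S=0$ when $C$ is balanced, at which point the removability hypothesis supplies the $\pm 1$ tuning knobs. You also correctly exploit that $f_2$ may be zero on $E(C)$ since $f_1$ is already nonzero there. That structure is essentially the paper's.

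However two of the justifications you give are wrong, and they should be fixed. First, the ``first stage'' cannot be deduced from Lemma~\ref{lm: flow extention}: the hypothesis of the lemma you are proving fixes the given NZW $f'$ of $G-V(C)$, and $f$ must agree with it on $E(G')$, so you are not free to reconstruct a new $\mathbb{Z}_2\times\mathbb{Z}_3$-NZF via Theorems~\ref{6-flow}, \ref{group-flow} and a prescribed boundary; moreover Lemma~\ref{lm: flow extention} concerns NZFs of ordinary graphs, not NZWs. What actually makes the first stage work is elementary: each $u\in V(G')$ adjacent to $V(C)$ has $d_{G'}(u)<3$, so the new boundary edges at $u$ raise its degree to at most $3$ and the watering constraint at $u$ is met by a direct local choice (for a $3$-vertex of $G$ with one new edge the nonzero value is uniquely forced by $\partial f_2'(u)=\pm 1$; similar short case checks handle the other degree patterns). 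In particular, there is generally \emph{no} $\pm1$ freedom at those $V(G')$-vertices; the $\pm 1$ freedom the removability condition buys you lives at the $2$-vertices of $V(C)$ and at the unbalanced chords, exactly as the paper uses it.

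Second, the appeal to axioms (S3) and (S4) is misplaced. The paper's proof of this lemma uses only that $G$ is a shrubbery insofar as $\Delta(G)\le 3$, together with the \emph{removability} of $C$ (which is in the hypothesis, not an axiom). No over-determinacy issue arises at the $V(G')$-side constraints — they decouple vertex by vertex — and the balanced-$4$-circuit exclusion (S4) plays no role here. Presenting (S3)/(S4) as the crux would send a reader chasing arguments that do not exist in the actual proof.
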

\begin{proof}

  We first  extend $f'$ to $f: E(G)\to \mathbb Z_{2} \times \mathbb Z_{3}$ as follows where $\alpha_e$ is a variable in $\mathbb{Z}_3$ for every $e \in \mathcal{U}(C)$.
  $$f(e) = \left\{
\begin{array}{ll}
(0,\pm 1) &~\mbox{if $e \in \delta(V(C))$}\\
(1,0)   &~\mbox{if $e\in E(C)$}\\
(0,1) &~\mbox{if $e\in \mathcal{B}(C)$}\\
(0,\alpha_e) &~\mbox{if $e\in \mathcal{U}(C)$.}
\end{array}
\right.
$$
Since every $v \in V(G)\setminus V(C)$ adjacent to a vertex in $V(C)$ has degree less than three in $G'$, we may choose values $f(e)$ for each edge $e \in \delta(V(C)$ so that $f$ satisfies the boundary condition for a watering at every vertex in $V(G)\setminus V(C)$.  Obviously by the construction  $\partial f_1(v) = 0$ for every $v \in V(C)$. So we need only adjust $\partial f_2(v)$ for $v \in V(C)$ to obtain a watering.
We  distinguish the following two cases.

\medskip \noindent
Case 1:  $C$ is unbalanced.

In this case $\mathcal{B}(C) = \emptyset$.
Choose arbitrary $\pm 1$ assignments to the variables $\alpha_e$. Since $C$ is unbalanced, for every vertex $u\in V(C)$, there is a function $\eta^u: E(C)\to  \mathbb{Z}_3$ so that $\partial \eta_u(u)=1$ and $\partial \eta_u(v)=0$ for any $v\in V(C)\setminus \{u\}$.  Now we may adjust $f_2$ by adding a suitable combination of the $\eta^u$ functions so that $f$ is an NZW of $G$, as desired.

\medskip \noindent
 Case 2:   $C$ is balanced.

 WLOG we may assume  that every edge of $C$ is positive and every unbalanced chord is oriented so that each half edge is directed away from its end. In this case, each negative chord $e$ contributes $-2f_2(e) = \alpha_e$ to the sum $\sum_{v\in V(C)}\partial f_2(v)$. For every $v \in V(C)\cap V_2(G)$, let $\beta_v$ be a variable in $\mathbb{Z}_3$.
   Since $|\mathcal{U}(C)|+|V_2(G)\cap V(C)|\geq 2$, we can choose  $\pm 1$ assignments to all of the variables $\alpha_e$ and $\beta_v$ so that the following equation is satisfied:
   $$\sum_{v\in V(C)}\partial f_2(v) = \sum_{v\in V(C)\cap V_2(G)}\beta_v.$$
   By Lemma~\ref{lm: flow extention2},  we may choose a function $\phi: E(C) \rightarrow \mathbb{Z}_3$ so that
   $$\partial \phi(v)  = \left\{
\begin{array}{rl}
\beta_v - \partial f_2(v) &~\mbox{if $v\in V(C)\cap V_2(G)$}\\
-\partial f_2(v)  &~\mbox{if $v\in V(C)\setminus V_2(G)$}.
\end{array}
\right.
$$
Now modify $f$ by adding $\phi$ to $f_2$ and then $f$ is an NZW of $G$, as desired.
\end{proof}

 A \emph{theta} is a graph consisting of two distinct vertices and three internally disjoint paths between them. A theta is \emph{unbalanced} if it contains an unbalanced circuit.  By the definition, the following observation is straightforward.

\begin{observation}\label{ob: no theta}
Let $G$ be a signed graph containing no unbalanced thetas and  $\Delta(G) \leq 3$. Then for any unbalanced circuit $C$ and any $x\in V(G)\setminus V(C)$, $G$ contains no two internal disjoint $(x,C)$-paths.
\end{observation}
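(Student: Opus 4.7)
The plan is to prove the observation by contradiction. Suppose that $C$ is an unbalanced circuit and $x \in V(G)\setminus V(C)$ admits two internally disjoint $(x,C)$-paths $P_1$ and $P_2$, with $P_i$ having its $C$-endpoint at $y_i$. My target is to exhibit an unbalanced theta inside $G$, contradicting the hypothesis.

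First I would dispose of the degenerate case $y_1 = y_2 =: y$. In that case, each of $P_1$ and $P_2$ contributes an edge at $y$ and, because the two paths are distinct and internally disjoint, these edges are distinct. Together with the two edges of $C$ incident to $y$, this forces $d_G(y) \geq 4$, contradicting $\Delta(G) \leq 3$. Hence $y_1 \neq y_2$.

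Now the main step: since $y_1, y_2 \in V(C)$ are distinct, $C$ decomposes into two internally disjoint $(y_1, y_2)$-paths $Q_1, Q_2$ with $C = Q_1 \cup Q_2$. The concatenation $P := P_1 \cup P_2$ is a third $(y_1, y_2)$-path through $x$, whose interior lies outside $V(C)$ and is therefore internally disjoint from both $Q_1$ and $Q_2$. Thus the three paths $Q_1, Q_2, P$ form a theta $T$ with apex vertices $y_1, y_2$. Since $Q_1 \cup Q_2 = C$ is an unbalanced circuit contained in $T$, $T$ is an unbalanced theta, contradicting the hypothesis that $G$ contains no unbalanced theta.

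There is essentially no hard step here: the only subtlety is ensuring that $y_1 \neq y_2$, which is forced by the maximum-degree assumption, and verifying that the three paths are mutually internally disjoint, which is immediate from the construction. Once these are in place, the unbalancedness of the resulting theta is inherited directly from the unbalancedness of $C$.
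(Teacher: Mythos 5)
Your proof is correct and complete. The paper labels this observation as ``straightforward'' and gives no argument; your two-case proof --- ruling out $y_1 = y_2$ via $\Delta(G)\leq 3$, then forming the theta $Q_1 \cup Q_2 \cup (P_1\cup P_2)$ (which contains $C$ and is therefore unbalanced) when $y_1\neq y_2$ --- is exactly the intended elementary argument, with the pairwise internal disjointness of the three $(y_1,y_2)$-paths and the distinctness of the edges at $y$ in the degenerate case all properly verified.
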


\begin{lemma}
\label{lm: water}
Every shrubbery has an NZW. Furthermore, if $G$ is a shrubbery with an unbalanced theta or a negative loop and $\varepsilon\in \{-1,1\}$,
then $G$ has an NZW $f = (f_1, f_2)$ such that $\sigma (\supp(f_1))=\varepsilon$.
\end{lemma}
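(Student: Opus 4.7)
The plan is to prove Lemma~\ref{lm: water} by induction on $|V(G)|+|E(G)|$, with Lemma~\ref{lm: remove circuit} as the principal extension tool. The central task reduces to the following: find a removable circuit $C\subseteq G$ such that $G':=G-V(C)$ is again a shrubbery. Once such a $C$ is found, the inductive hypothesis supplies an NZW $(f_1',f_2')$ of $G'$, and Lemma~\ref{lm: remove circuit} extends it to an NZW of $G$. By Proposition~\ref{PROP: inheritance}, properties (S1), (S2), (S4) automatically pass to $G'$; only (S3) requires genuine verification.

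After disposing of a few explicit base cases (small shrubberies such as a single negative loop, or a pair of parallel negative edges), the inductive step splits on whether $G$ contains an unbalanced theta. If it does, any unbalanced circuit inside the theta is automatically removable; one then chooses among the available unbalanced circuits so that $G-V(C)$ still satisfies (S3), exploiting the fact that (S3) can fail only if removing $V(C)$ exposes a small balanced dense subgraph. If $G$ contains no unbalanced theta, Observation~\ref{ob: no theta} together with Theorem~\ref{thm: linkage} forces $G$ to have a very restricted structure---contractible to either a cycle-plus-matching or a planar cubic graph with all unbalanced edges arranged around a single facial circuit. In this restricted regime I would produce a removable circuit directly: either a balanced circuit with two unbalanced chords coming from the facial structure, or one passing through two $2$-vertices of $G$; condition (S4) ensures this circuit is not a forbidden balanced $4$-circuit.

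For the ``furthermore'' clause, the key observation is the algebraic identity implicit in Lemma~\ref{lm: remove circuit}: since $\supp(f_1)=\supp(f_1')\cup E(C)$, one has $\sigma(\supp(f_1))=\sigma(\supp(f_1'))\cdot\sigma(C)$. Thus an unbalanced removable circuit flips the sign of $\supp(f_1)$. Given an unbalanced theta or a negative loop in $G$, one can exhibit two distinct choices of removable circuit with opposite values of $\sigma(C)$, or alternatively two choices each of which inherits the theta or loop into $G-V(C)$ so that induction may prescribe either sign. In the negative-loop case this toggle is particularly direct: the loop itself can be placed into $\supp(f_1)$ or kept out by adjusting the watering locally at its vertex.

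The principal obstacle will be verifying that the chosen $C$ actually leaves a shrubbery---specifically, that deleting $V(C)$ neither exposes a balanced connected subgraph violating (S3) nor produces a balanced $4$-circuit violating (S4). Handling these structural obstructions (rather than the algebraic extension step) is the technical heart of the proof: the choice of $C$ must be guided by the global picture provided by Theorem~\ref{thm: linkage} in the no-theta case, and by the edge-connectivity around an unbalanced circuit in the theta case. The ``furthermore'' clause then follows from confirming, in each case, that enough flexibility remains in the choice of $C$ to realize both parities of $\sigma(\supp(f_1))$.
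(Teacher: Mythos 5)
Your high-level strategy (minimal counterexample/induction, delete a removable circuit, extend via Lemma~\ref{lm: remove circuit}) does match the paper's, and the observation $\sigma(\supp(f_1))=\sigma(\supp(f_1'))\cdot\sigma(C)$ for the \emph{furthermore} clause is exactly the right algebraic reduction. However, you have misidentified the technical obstacle in a way that would derail the argument.

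You say ``only (S3) requires genuine verification'' and call preserving the shrubbery property for $G-V(C)$ the ``technical heart of the proof.'' This is backwards: Proposition~\ref{PROP: inheritance} already shows that \emph{every} signed subgraph of a shrubbery is a shrubbery, so all of (S1)--(S4) hold automatically for $G-V(C)$ and no verification at all is needed. The genuine difficulty lies elsewhere: to realize a prescribed sign $\varepsilon$, you must find a removable circuit $C$ so that either (a) $G-V(C)$ still contains an unbalanced theta (then induction supplies both parities on $G-V(C)$), or (b) $G-V(C)$ is balanced (forcing $\sigma(\supp(f_1'))=1$) \emph{and} $\sigma(C)=\varepsilon$. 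The entire body of the paper's proof (Claims~\ref{cl: remove}--\ref{cl: segment}) is devoted to showing that a minimum counterexample must admit one of these two kinds of removable circuit; this is where all the edge-cut analysis (Claims~\ref{cl: 2-edge-cut}, \ref{cl: 3-edge-cut}), the disjoint-unbalanced-circuit arguments (Claims~\ref{cl:3-vertices}, \ref{cl: 2-disjoint}) and the final planar-structure analysis happen.

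Your proposed case split is also misaligned with the hypotheses you invoke. You want to use Theorem~\ref{thm: linkage} in the ``$G$ has no unbalanced theta'' branch, but that theorem requires that $G$ contain no two \emph{edge-disjoint} unbalanced circuits, which is \emph{not} implied by the absence of an unbalanced theta (two unbalanced circuits may share no edges yet not form a theta). In the paper, Theorem~\ref{thm: linkage} is applied only after Claims~\ref{cl: 2-disjoint}, \ref{cl: theta} and \ref{cl: k=1} establish that $G$ \emph{does} have an unbalanced theta, has no two vertex-disjoint unbalanced circuits, and satisfies $\epsilon(G)=|E_N(G)|\ge2$ --- and even then the application is to a carefully chosen contraction $G/S$. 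The no-theta case is in fact the \emph{easy} one for the paper (a few lines inside Claim~\ref{cl: theta}, using only (S3) and $2$-connectivity), precisely because the furthermore clause's hypothesis is not in force there. So your proposal would need to be restructured: the hard structural work is not in preserving (S3) or in the no-theta branch, but in proving that a minimal counterexample necessarily has the very rigid near-planar form of Claims~\ref{cl: k=2}--\ref{cl: segment}, and that this form is impossible.
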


\begin{proof}
Let $G$ be a minimum counterexample  with respect to $E(G)$. Then $G$ is connected.

\begin{claim}
\label{cl: 2-connect}
  $G$ is $2$-connected, and thus contains no loops.
\end{claim}

\noindent{\it Proof of Claim \ref{cl: 2-connect}.}
Suppose to the contrary that $G$ has a cut vertex. Since $\Delta(G)\leq 3$, $G$ contains a cut edge $e=v_1 v_2$. Let $G_i$ be the component of $G-e$ containing $v_i$.  By the minimality of $G$, each $G_i$ admits an NZW
  $f^i=(f_1^i, f_2^i)$, and $\partial f_2^i(v_i)\neq 0$ since $d_{G_i}(v_i)\leq 2$. Thus we can obtain an NZW $f=(f_1, f_2)$ of $G$  by
  setting $f(e)=(0, 1)$ and $f|_{E(G_i)}=f^i$ or $-f^i$ according to the orientation of $e$ and the values of $\partial f_2^1(v_1)$ and $\partial f_2^2(v_2)$. Further, if $G$ contains an unbalanced theta or a negative loop, so does one
  component of $G-e$, say $G_1$. By the minimality of $G$, we choose $f^1$ such that $\sigma (\supp(f_1^1))=\epsilon \cdot \sigma (\supp(f_1^2))$.
  Hence $\sigma (\supp(f_1))=\epsilon \cdot \sigma (\supp(f_1^2))\cdot \sigma (\supp(f_1^2))=\epsilon$, a contradiction.   \hfill $\Box$

\medskip

\begin{claim}
\label{cl: remove}
  $G$ has no  removable circuit $C$ with one of the following properties:

  {\rm (A)} $G-V(C)$ contains an unbalanced theta.

  {\rm (B)} $G-V(C)$ is balanced and $\sigma(C)=\epsilon$.
\end{claim}

\noindent{\it Proof of Claim \ref{cl: remove}.}
Suppose the claim is not true.
 By the minimality of $G$,  there exists an NZW $f'=(f_1', f_2')$ of $G-V(C)$ such that $\sigma (\supp(f_1'))=\epsilon \cdot \sigma(C)$ in Case (A) and $\sigma (\supp(f_1'))=1$ in Case (B). By Lemma~\ref{lm: remove circuit}, $f'$ can be extended to an
NZW $f=(f_1, f_2)$ of $G$ such that  $\supp(f_1)=\supp(f_1')\cup E(C)$. Obviously, $\sigma (\supp(f_1))=\sigma(\supp(f_1'))\cdot \sigma(C)=\epsilon$, a contradiction.
\hfill $\Box$

\begin{claim}
\label{cl: 2-edge-cut}
Let $X\subset V(G)$ such that  $|X| \geq 2$, $G[X]$ is balanced and $|\delta_G(X)|=2$. If $G-X$ either contains an unbalanced theta, or is balanced and contains a circuit, then $X\subseteq V_2(G)$ and $G[X]$ is a path.
\end{claim}

\noindent{\it Proof of Claim \ref{cl: 2-edge-cut}.}
Suppose the claim fails. Let $X\subset V(G)$ be a minimal set with the above properties.
Recall that $G$ is $2$-connected by Claim \ref{cl: 2-connect}. Since $|\delta_G(X)|=2$, $G[X]$ is connected. If $G[X]$ is a path, then $X\subseteq V_2(G)$. Thus $G[X]$ is not a path. Since $G[X]$ is connected, we have $X\cap V_3(G) \not = \emptyset$.  Hence  $X$ is nontrivial and  $G[X]$ is $2$-connected by the minimality of $X$. By (S3),
 $X$ contains two vertices of $V_2(G)$. Let $C$ be a circuit in  $G[X]$ containing at least two $2$-vertices. Then $C$ is  removable and thus by Claim \ref{cl: remove}-(A), $G-X$ contains no unbalanced theta.   By the hypothesis, $G-X$ is balanced and contains a circuit.

 Denote $\delta_G(X) = \{e_1, e_2\}$. Since both $G[X]$ and $G-X$ are balanced, by possibly replacing $\sigma_G$ by an equivalent signature, we may assume that $\sigma_G(e_1) \in \{-1,1\}$ and that $\sigma_G(e) = 1$ for every other edge $e \in E(G)$. Obviously, if $\sigma_G(e_1)=1$ then $G$ is an ordinary graph and so we get a contradiction to Claim \ref{cl: remove}-(B) since $C$ is removable and balanced. Hence $\sigma_G(e_1)=-1$ and $e_1$ is the only negative edge in $G$.

Let $C'$ be an unbalanced circuit, which contains $e_1$. Then $C'$ is removable, $G-V(C')$ is balanced, and $\sigma(C') = -1$. By Claim \ref{cl: remove}-(B), we have $\epsilon=1$. Since $C$ is removable and $\sigma_G(C)=1=\epsilon$, $G-V(C)$ is unbalanced by Claim \ref{cl: remove}-(B) again. We may choose $C'$ such that $V(C') \cap V(C) = \emptyset$. Note that $e_1$ is the unique negative edge of $G$.  $C'$ contains the edge cut $\{e_1, e_2\}$.  Let $x \in V(C')\cap X$ and $C''$ be a circuit in $G-X$. Then there are two internal disjoint $(x,C'')$-paths $P_1$ and $P_2$ in $G- V(C)$ such that $e_i\in P_i$ for $i = 1,2$. Then $P_1\cup P_2\cup C''$ is an unbalanced theta  in $G- V(C)$.  This is a contradiction to Claim \ref{cl: remove}-(A).
\hfill $\Box$

\begin{claim}\label{cl: 3-edge-cut}
Let $X\subset V(G)$ such that  $|X| \geq 2$, $G[X]$ is balanced and $|\delta_G(X)|\leq 3$. For any two distinct ends $x_1, x_2$ in $X$ of $\delta_G(X)$, there is an $(x_1,x_2)$-path in $G[X]$ containing at least one vertex in $V_2(G)$.
\end{claim}

\noindent{\it Proof of Claim \ref{cl: 3-edge-cut}.}
Let $x_1x_1', x_2x_2'\in \delta_G(X)$, and $B_i$ be the maximal $2$-connected subgraph of $G[X]$ containing $x_i$ for $i=1, 2$. Then every edge in $\delta_{G[X]}(V(B_i))$ is a bridge of $G[X]$,  so $|\delta_{G}(V(B_i))|\leq |\delta_G(X)|$ since $G$ is $2$-connected.

If $V(B_1)\cap V(B_2)\neq \emptyset$, then $|V(B_1)\cap V(B_2)|\ge 2$ since $\Delta(G)\leq 3$, and thus $B_1=B_2$ by their maximality.  By (S3),
 there is a vertex $y_1\in V(B_1)\cap V_2(G)$.  Since $B_1$ is $2$-connected, it has a $(y_1,x_1)$-path $P_1$ and a $(y_1,x_2)$-path $P_2$ that are internally disjoint. Thus $P_1\cup P_2$ is a desired path.

If $V(B_1)\cap V(B_2)= \emptyset$, then for some $i\in \{1, 2\}$, say $i=1$,  $|\delta_G(V(B_1))|=2$ since $|\delta_{G}(V(B_j))|\leq |\delta_G(X)|\leq 3$ for $j=1,2$. Let $y_2\in V(B_1)$ be the end of the unique edge in $\delta_G(V(B_1))\setminus \{x_1x_1'\}$ and $P_3$ be a $(y_2,x_2)$-path in $G[X]$. If $x_1\in V_2(G)$, then every $(x_1,x_2)$-path is a desired path. If $x_1\in V_3(G)$, then $|V(B_1)|\ge 2$ and thus $B_1$ has a vertex $y_3\in V_2(G)\setminus \{y_2\}$ by (S3). Since $B_1$ is $2$-connected, it has an $(y_3,x_1)$-path $P_4$ and a $(y_3,y_2)$-path $P_5$  which are internally disjoint. Thus $P_3\cup P_4\cup P_5$ is a desired path.
\hfill $\Box$

\begin{claim}
\label{cl:3-vertices}
$G$ contains no two disjoint unbalanced circuits $C_1$ and $C_2$ such that $V_3 \subseteq V(C_1)\cup V(C_2)$.
\end{claim}

\noindent{\it Proof of Claim \ref{cl:3-vertices}.}
Suppose the claim fails. Let $C_1$ and $C_2$ be two disjoint unbalanced circuits such that $V_3 \subseteq V(C_1)\cup V(C_2)$.
Then every vertex of $G'= G-E(C_1\cup C_2)$ is of degree at most $2$. By   Claim \ref{cl: remove}-(A),  $G - V(C_i)$ contains no unbalanced theta for each $i= 1,2$. Thus every nontrivial component of $G'$ is a path with one end in $V(C_1)$ and the other end in $V(C_2)$.   Since $G$ is $2$-connected and $\Delta(G) \leq 3$, there are at least two  $3$-vertices in each $C_i$.

When $\epsilon=-1$, choose $x_1, x_2$ from $V_3(G)\cap V(C_1)$ such that the segment $P=x_1C_1x_2$ contains all vertices of $V_3(G)\cap V(C_1)$. Let $P_i$ be the path in $G'$ with one end $x_i$ and $y_i$ be the other end of $P_i$ for $i = 1,2$.  Since $C_2$ is unbalanced, there is a segment, say $y_1C_2y_2$, of $C_2$ such that the circuit $C=P\cup P_1\cup P_2\cup y_1C_2y_2$ is unbalanced, and thus $C$ is removable. This contradicts Claim \ref{cl: remove}-(B) since $G-V(C)$ is a forest (which is balanced).

When $\epsilon=1$,  by the minimality of $G$ and since  $G''=G-V(C_1\cup C_2)$  is a forest, $G''$ admits an NZW $f'= (f_1', f_2')$ with $\supp(f_1')=\emptyset$. By applying Lemma~\ref{lm: remove circuit} twice, we extend $f' = (f_1', f_2')$ to an NZW $f = (f_1, f_2)$ of $G$ such that $\supp(f_1)=E(C_1)\cup E(C_2)$. So $\sigma(\supp(f_1)) =\sigma (C_1) \cdot \sigma (C_2)=1$, a contradiction.

\begin{claim}
\label{cl: 2-disjoint}
  $G$ contains no two disjoint unbalanced circuits.
\end{claim}
\noindent{\it Proof of Claim \ref{cl: 2-disjoint}.}
Suppose to the contrary that $C_1$ and $C_2$ are two disjoint unbalanced circuits of $G$.  By Claim~\ref{cl:3-vertices},  $V_3(G)\setminus V(C_1\cup C_2)\neq \emptyset$.

Let $x\in V_3(G)\setminus V(C_1\cup C_2)$. By  Claim~\ref{cl: remove}-(A) and Observation \ref{ob: no theta}, there exists a $2$-edge-cut of $G$ separating $x$ from $V(C_1\cup C_2)$. Let $\{e_1,e_2\}$ be such a $2$-edge-cut.
Let
$$
\mathcal{F}=\{e_1\}\cup \{e\in E(G) : \{e,e_1\} \mbox{ is a $2$-edge-cut of $G$}\}
$$
and $\mathcal{B}$ be the set of all nontrivial components of $G-\mathcal{F}$.  Note that every member of $\cal B$ is $2$-connected. Since $d_G(x) = 3$, there is a   $B_0\in \mathcal{B}$  containing $x$.   Obviously $B_0$ doesn't contain $C_1$ or $C_2$, so $|\mathcal{B}| \geq 2$.

Let $B \in \mathcal{B}$. Then $|\delta_G(B)| = 2$.  If $B$ is balanced, then by (S3), $B$ contains at least two $2$-vertices and thus contains a circuit containing at least two $2$-vertices which  is removable.  If $B$ is unbalanced, then $B$ contains an unbalanced circuit which is also  is removable. Thus  each $B \in \mathcal{B}$ contains a removable circuits. Since $|\mathcal{B}| \geq 2$, by  Claim~\ref{cl: remove}-(A), $B$ is an unbalanced circuit  if it is unbalanced. Therefore every $B\in \mathcal{B}$ is either balanced or is an unbalanced circuit.  In particular, $C_1$ and $C_2$ are two distinct members   of $ \mathcal{B}$ and $|\mathcal{B}| \geq 3$.

Since $G$ is $2$-connected, there is a circuit that contains all edges in  $\mathcal{F}$ and goes through every $B \in \mathcal{B}$. We choose such a circuit $C$  with the following properties:

(1) $\sigma(C)  = \epsilon$ (the existence of $C$ is guaranteed since $C_1$ is unbalanced);

(2) subject to (1), $|V_2(G)\cap V(C -V(C_1))|$ is as large as possible;

(3)  subject to (1) and (2),  $|E_N(G)\cap E(C -V(C_1))|$ is as small as possible.

Since each $B$ is either balanced or is an unbalanced circuit, $G- V(C)$ is  balanced. Since $\sigma(C)  = \epsilon$,
by  Claim~\ref{cl: remove}-(B), $C$ is not removable and thus $C$ is balanced.

Let $B \in \mathcal{B}\setminus \{C_1\}$. If  $B$ is balanced or is unbalanced but not a circuit of length $2$,  then it contains a  $2$-vertex. Thus by (2)  $C$ contains at least one $2$-vertex  in $B$. If $B$ is an unbalanced circuit of length $2$, then  by (3), $C$  contains the positive edge in $B$.  In this case, since $C$ is balanced,  the other  edge in $B$ (which is negative) belongs to $\mathcal{U}(C)$.  Therefore every $B \in \mathcal{B}\setminus \{C_1\}$ contributes at least $1$ to
$|\mathcal{U}(C)|+|V_2(G)\cap V(C)|$. Since $|\mathcal{B}\setminus \{C_1\}| \geq 2$, we have $|\mathcal{U}(C)|+|V_2(G)\cap V(C)|\geq 2$. Hence $C$ is a removable circuit, a contradiction.
\hfill $\Box$

 \begin{claim}
\label{cl: theta}
  $G$ contains an unbalanced theta and $\epsilon=1$.
\end{claim}

\noindent{\it Proof of Claim \ref{cl: theta}.}
We first show that $G$ contains an unbalanced theta. Suppose that $G$ contains no unbalanced theta.   If $G$ is unbalanced, $G$ contains an unbalanced circuit. If
$G$ is balanced,  $|V_2(G)|\geq 4$ by (S3)
 and thus it has a circuit  containing at least two $2$-vertices since $G$ is $2$-connected.
Hence $G$ has a removable circuit $C$ in either case.  By the minimality of $G$, $G -V(C)$ has an NZW and by Lemma~\ref{lm: remove circuit}, we may extend this to a desired NZW of $G$, a contradiction. Therefore $G$ contains an unbalanced theta.

The existence of unbalanced thetas implies that $\epsilon \in \{-1,1\}$. Let $C$ be an unbalanced circuit.  By Claim \ref{cl: 2-disjoint}, $G$ contains no two disjoint unbalanced circuits, and thus $G-V(C)$ is balanced.  By   Claim \ref{cl: remove}-(B),  $\epsilon \neq \sigma(C)=-1$, so $\epsilon=1$.
 \hfill $\Box$

\begin{claim}\label{cl: k=1}
$|E_N(G)|\ge 2$.
\end{claim}

\noindent{\it Proof of Claim \ref{cl: k=1}.} By Claim~\ref{cl: theta}, $G$ is unbalanced.
Suppose to the contrary that $E_N(G)=\{e_0\}$. Let $P$ be the maximal  subdivided edge of $G$  containing $e_0$. Let $y_0, y_1$ be the two ends of $P$.  Then $Int(P)\subseteq V_2(G)$ and $y_0, y_1\in V_3(G)$. Let $G'=G-Int(P)$ if $Int(P) \not = \emptyset$; Otherwise, let $G' = G - e_0$.

We claim that $G'$ is $2$-connected. Otherwise, let $B$ be the maximal $2$-connected subgraph of $G'$ containing $y_1$. Then $B\neq G'$ and  $B$ is nontrivial since $d_G(y_1) = 3$. By the maximality of $B$, $\delta_{G'}(V(B))\neq \emptyset$ in which each edge is a bridge of $G'$. Thus $y_0\in V(G-V(B))$. Since $G$ is $2$-connected by Claim~\ref{cl: 2-connect},  $\delta_G(V(B))$ is a $2$-edge-cut of $G$. Note that $B$ is balanced and $G-V(B)$ is balanced and contains circuits since $y_0\in V_3(G)$. By  Claim \ref{cl: 2-edge-cut}, $V(B)\subseteq V_2(G)$, which contradicts  the fact $y_1\in V_3(G)$.

\medskip
\noindent
{\it (i)   $G'$ contains no circuit $C$ with  $V(C) \cap \{y_0,y_1\}  \not = \emptyset$ and  $|V(C)\cap V_2(G)| \geq 2$.}
\medskip

\noindent{\it Proof of (i).} Otherwise, $C$ is a removable circuit such that $G-V(C)$ is balanced and $\sigma(C) = 1 =\epsilon$, a contradiction to Claim \ref{cl: remove}-(B).

\medskip

Since $G'$ is a balanced shrubbery, 
 $|V_2(G')|\geq 4$ by (S3) and thus at least two of them,  say $y_2$ and $y_3$,  also belong to $V_2(G)$. Note  $\{y_2,y_3\}\cap \{y_0,y_1\} =\emptyset$.  By $(i)$, there is no circuit in $G'$ containing $\{y_1,y_2,y_3\}$. Thus by Theorem \ref{th: 1-negative}, there is a partition of $V(G')$ into $\mathcal{I}=\{X_1, X_2, Y_1, Y_2, Y_3\}$ such that $y_i\in Y_i$ ($i=1,2,3$),  $\delta_{G'}(X_1, X_2)=\delta_{G'}(Y_i, Y_j)=\emptyset$ ($1\leq i < j \leq 3$), and $\delta_{G'}(X_i, Y_j)=e_{ij}$ ($i=1,2$; $j=1,2,3$).  For each $Z\in \mathcal{I}$,
since $G'$ is $2$-connected and $|\delta_{G'}(Z)|\leq 3$,  $G'[Z]$ is connected.

 Since  $G'$ is $2$-connected and $|\delta_{G'}(Y_j)|= 2$ for $j \in \{2,3\}$, we have the following statement.

\medskip
\noindent
{\it (ii) For any $\{i,j\} = \{2,3\}$, there is a circuit $C_i$ in $G' - Y_j$ containing $y_1$ and all the edges in $\{e_{11}, e_{1i}, e_{2i}, e_{21}\}$.  We choose $C_i$ such that $|V(C_i)\cap V_2(G)|$  is as large as possible.    Then by (i),  $|V(C_i)\cap V_2(G)| \leq 1$.}

\medskip
\noindent
{\it (iii)  $y_0 \not \in Y_2\cup Y_3$, $Y_2=\{y_2\}$, and $Y_3=\{y_3\}$.}
\medskip

\noindent{\it Proof of (iii).}  Let $j \in \{2,3\}$. We first show $|Y_j| = 1$ if  $y_0\notin Y_j$.   WLOG suppose to the contrary $y_0 \not \in Y_3$ and $|Y_3| \geq 2$.   Since $y_0 \not \in Y_3$, $|\delta_G(Y_3)|= 2$.  By (ii)  $C_2$ is a circuit in   $G'-Y_3$. Since $G'[Z]$ is connected for each $Z \in \mathcal{I}$, $G'-Y_3$ is connected. Thus  there is a $(y_0, C_2)$-path $P'$ in $G'-Y_3$, so $P'\cup P\cup C_2$ is an unbalanced theta in $G - Y_3$.  By  Claim \ref{cl: 2-edge-cut}, $Y_3\subseteq V_2(G)$. Thus $G[Y_3]$ is a path and  $Y_3\subset V(C_3)$. By the choice of $C_3$, $V(C_3)$ contains at most one $2$-vertex. This implies $|Y_3| = 1$.

Now we show $y_0 \not \in Y_2\cup Y_3$. Otherwise WLOG, assume $y_0 \not \in Y_3$ and $y_0\in Y_2$.
Then $Y_3 = \{y_3\}$ and $y_3 \in V_2(G)$.   By (S4), $C_3$ is not a  balanced $4$-circuit, and thus there is a set $Z\in \{Y_1, X_1, X_2\}$ such that $|V(C_3)\cap Z|\ge 2$. Since $|\delta_G(Z)|=3$, by Claim \ref{cl: 3-edge-cut}, $(V(C_3)\cap V_2(G))\cap Z\neq \emptyset$. Thus $|V(C_3)\cap V_2(G)|\ge |(V(C_3)\cap V_2(G))\cap Z|+|\{y_3\}|\ge 2$, a contradiction to $(ii)$. This shows $y_0 \not \in Y_2\cup Y_3$ and thus $|Y_2| = |Y_3| = 1$.

\medskip
\noindent
{\it (iv) $|X_i|=1$ if $y_0\notin X_i$ for any $i\in \{1,2\}$ and thus $ y_0 \in X_1\cup X_2$. }
\medskip

\noindent{\it Proof of (iv).}
Suppose that for some $i\in \{1,2\}$  $y_0\notin X_i$ and $|X_i| \ge 2$.  WLOG assume $i =1$. Let $x_{1j}$ be the end of $e_{1j}$ in $X_1$ for $j=1,2,3$.  Since $\Delta(G)\leq 3$,  $x_{11}\neq x_{1j}$ for some $j \in \{2,3\}$. Note that $x_{11}, x_{1j}\in V(C_j)$. Since $|\delta_G(X_1)|= 3$ and $G[X_1]$ is balanced, by Claim~\ref{cl: 3-edge-cut}, $(V(C_j)\cap V_2(G))\cap X_1\neq \emptyset$  by the choice of $C_j$. Since $y_j\in V(C_j)\cap V_2(G)$ by $(iii)$, $|V(C_3)\cap V_2(G)|\ge |(V(C_3)\cap V_2(G))\cap X_1|+|\{y_j\}|\ge 2$, a contradiction to $(ii)$.

If $y_0 \not \in X_1\cup X_2$, then $|X_1| = |X_2| = 1$. By (iii), $G[Y_2\cup Y_3\cup X_1\cup X_2]$ is a balanced $4$-circuit, a contradiction to (S4). Therefore $y_0 \in X_1\cup X_2$.

\medskip
By (iv), WLOG assume $y_0 \in X_1$. Then by (iv) and (iii), $|X_2| = |Y_2| = |Y_3| = 1$. Denote $X_2=\{x_2\}$.

\medskip
\noindent
{\it (v) $Y_1=\{y_1\}$.}
\medskip

\noindent{\it Proof of (v).}
Suppose to the contrary that $Y_1\neq \{y_1\}$. Then $|Y_1|\ge 2$ and $G[Y_1]$ is balanced. Let $C_4$ be a circuit containing all the edges in $\{e_{11}, e_{12}, e_{22}, e_{21}\}$ and $|V(C_4)\cap V_2(G)|$ is as large as possible.
Since $G[Y_1]$ is balanced and $|\delta_G(Y_1)|= 3$, by  Claim \ref{cl: 3-edge-cut}, $V(C_4)\cap Y_1\cap V_2(G) \not = \emptyset$.  Since $y_2 \in V(C_4)$,  $|V(C_4)\cap V_2(G)| \geq 2$. Since $\delta_G(Y_1) \cap C(V) = \{e_{11}, e_{21}\}$ and $|\delta_G(Y_1)| = 3$, $G- V(C_4)$ is balanced. Thus $C_4$ is a removable circuit, a contradiction to Claim \ref{cl: remove}-(B).
This completes the proof of $(v)$.

\medskip

Now we can complete the proof of the claim.
Let $x_{11}$, $x_{12}$ and $x_{13}$ be the ends of $e_{11}$, $e_{12}$ and $e_{13}$ in $X_1$, respectively. By (S4), $G[\{x_{12},x_{13},x_2,y_2,y_3\}]$ is not a $4$-circuit,  so $x_{12}\neq x_{13}$.

If $G'[X_1]$ contains two internally disjoint $(y_0,x_{12})$-path and $(y_0,x_{13})$-path, then $G'$ has a circuit $C_5$ which contains all vertices in $\{y_0, x_{12}, y_2, x_2, y_3, x_{13}\}$ and $\{y_2,y_3\}\subset V(C_5)\cap V_2(G)$, a contradiction to (i).  Hence $G[X_1]$ has a cut-edge separating $y_0$ from $\{x_{12},x_{13}\}$.

Let $B_1$  be the maximal $2$-connected subgraphs in $G[X_1]$ containing $y_0$. Then every edge in $\delta_{G[X_1]}(B_1)$ is a cut-edge of $G[X_1]$ by the maximality of $B_1$.   Since each $\delta_{G[X_1]}(B_1)$ is a cut-edge of $G[X_1]$ and   since $G'$ is $2$-connected and $|\delta_{G'}(X_1)|=3$, $|\delta_{G[X_1]}(B_1)|=1$ or $2$.
Since $G[X_1]$ has a cut-edge separating $y_0$ from $\{x_{12},x_{13}\}$, $x_{12}$ and $x_{13}$ are in the same component of $G[X_1]-B_1$. Denote this component by $B_2$.

 Let $P'$ be an $(x_{12}, x_{13})$-path in $G[B_2]$. Then $C_6 = P'\cup x_{12}y_2x_2y_3x_{13}$ is a balanced circuit containing at least two $2$-vertices in $G$ ($y_2$ and $y_3$) and thus $C_6$ is a removable circuit of $G$.   If $B_1$ has a circuit $C'$ containing $y_0$, then there  is  $(y_1,C')$-path $P''$ in $G'-V(C_6)$.  Recall that $P$ is the maximal subdivided edge in $G$ containing the only negative edge $e_0$. Thus $P\cup P''\cup C'$ is an unbalanced theta in $G'- V(C_6)$, a contradiction to Claim \ref{cl: remove}-(A). This implies $B_1$ is trivial and $V(B_1) = \{y_0\}$.

Let $z$ be the neighbor of $y_0$ in $B_2$. Then $ \delta_{G[X_1]}(B_2) = \{y_0z, e_{12}, e_{13}\}$. Since $x_{12}\neq x_{13}$,  $z\neq x_{1j}$ for some $j\in \{2,3\}$. Since $|\delta_G(B_2)|=3$, by Claim \ref{cl: 3-edge-cut}, $G[B_2]$ has a $(z,x_{1j})$-path containing at least one vertex in $V_2(G)$. Note $G[B_2] = G'[B_2]$. Thus $G'$ has a circuit containing  $y_0$ and at least two vertices in $V_2(G)$, a contradiction to (i). This completes the proof of Claim \ref{cl: k=1}.
\hfill $\Box$

\medskip

By Claim \ref{cl: k=1}, $\epsilon(G)=|E_N(G)|\ge 2$. Denote $\epsilon(G)=k$.
By Claim \ref{cl: 2-connect} and Theorem \ref{thm: linkage}, we can choose a minimum subset $S\subseteq E(G)\setminus E_N(G)$ such that $H=G/S$ satisfies the following properties:
\begin{itemize}
\item[(i)] $\Delta(H)\leq 3$;
\item[(ii)] $H-N(H)-\cup_{e\in N(H)} Int(P_e)$ is a $2$-connected planar graph with a facial circuit $C$, where $P_e$ is the maximal subdivided edge in $H$ containing $e$;
\item[(iii)] $x_1, \dots ,x_k, x_{k+1}, \dots , x_{2k}$ are pairwise distinct and lie in that cyclic order on $C$, where  $E_N(H)=E_N(G)=\{e_1, \dots, e_k\}$ and  $x_i, x_{k+i}$ are the two ends of $P_{e_i}$ for each $i\in [1,k]$.
\end{itemize}

For each $v\in V(H)$, let $G_v$  denote the corresponding component of $G-E(H)$. Clearly, $G_v$ is $2$-connected by the minimality of $S$. Moreover, $S=\cup_{v\in V(H)}E(G_v)$ and $E(G)=E(H)\cup S.$

\begin{claim}
\label{cl: k=2}
$k=2$ and $|Int(P_{e_1})|+|Int(P_{e_2})|=1$.
\end{claim}

\noindent{\it Proof of Claim \ref{cl: k=2}.} Since $k \geq 2$, it is easy to see by Claim \ref{cl: 2-edge-cut} and by the minimality of $S$ that  if $d_H(x) = 2$ then $G_x= \{x\}$.
We first construct a circuit $C_H$ in the following cases. If  there are distinct $i,j\in [1,k]$ such that $|Int(P_{e_i})|=|Int(P_{e_j})|=0$, let $C_H=C$; If  $|Int(P_{e_i})|+|Int(P_{e_{i+1}})|\ge 2$ for some $i\in [1,k]$, let $C_H=C - E(x_iCx_{i+1}) -E(x_{i+k}Cx_{i+k+1})+P_{e_i} + P_{e_{i+1}}$. Note that $G_v$ is $2$-connected for any $v\in V(H)$, $\Delta(H)\leq 3$ and $\Delta(G)\leq 3$. Then $C_H$ can be extended to a removable circuit $C_G$ of $G$ and $G-V(C_G)$ is also balanced, a contradiction to  Claim \ref{cl: remove}-(B). So the claim holds.
\hfill $\Box$

\medskip

WLOG assume that $Int(P_{e_1})=\emptyset$ and $Int(P_{e_2})=\{y\}$ by Claim \ref{cl: k=2}. Then $P_{e_1}=x_1x_3$ and $P_{e_2}=x_2yx_4$. Denote $A_i=x_i C x_{i+1}$ $\pmod 4$ for $i\in [1,4]$,  ${C}_1=P_{e_1}\cup A_1\cup P_{e_2}\cup A_3$, and ${C}_2=P_{e_1}\cup A_4\cup P_{e_2}\cup A_2$. Note that both $C_1$ and $C_2$ contain the $2$-vertex $y$.

\begin{claim}
\label{cl: H=G}
$H=G$ and $V_2(G)=\{y\}$.
\end{claim}

\noindent{\it Proof of Claim \ref{cl: H=G}.}
As noted in the proof of Claim \ref{cl: k=2}, $G_y = \{y\}$. Let $x \in V(C)$. WLOG assume $x \in V(C_1)$.  Suppose that  $G_x$ is nontrivial. Then $G_x$ is balanced and  $|\delta_G(G_x)|\leq 3$. Since $G_x$ is $2$-connected, by Claim \ref{cl: 3-edge-cut}, ${C}_1$ can be extended to a circuit $C$  of $G$  such that $C$ contains the $2$-vertex $y$ and one $2$-vertex in $G_x$. Thus $C$ is balanced and  removable and $G-V(C)$ is balanced, a contradiction to  Claim \ref{cl: remove}-(B). Hence, $G_x$ is trivial.

  Assume that  there exists a vertex $u\in (V(G)\setminus (V(C)\cup \{y\})) \cap V_2(G)$. Since $G$ is $2$-connected, there are two  internal disjoint $(u,C)$-paths  $Q_1$ and $Q_2$ with $v_1$ and $v_2$ the end vertices in $C$ respectively.  Since $\Delta(G) \leq 3$, $v_1\not = v_2$. Let $C_3=Q_1 \cup Q_2 \cup v_1 C v_2$ and $C_4 \in \{{C}_1, {C}_2\}$ such that $V(C_4)\cap \{v_1, v_2\}
  \neq \emptyset$. Then $C'=C_3 \Delta C_4$ is a circuit  containing two $2$-vertices and the two negative edges. Thus $C$ is balanced and removable and  $G-V(C')$ is balanced, which contradicts  Claim~\ref{cl: remove}-(B). Thus $V_2(G)=\{y\}$.

Let $x$ be a $3$-vertex in $ V(H)\setminus V(C)$. If $G_x$ is nontrivial, then $G_x$ is balanced and $|\delta_G(G_x)| = 3$. By (S3),
  $G_x$ contains a $2$-vertex, a contradiction to the fact that $y$ is the only $2$-vertex in $G$.  Thus $G_x$ is trivial and  therefore $H=G$. \hfill $\Box$

\begin{claim}
\label{cl: segment}
 $Int(A_i)\neq \emptyset$ for each $i\in [1,4]$.
\end{claim}

\noindent{\it Proof of Claim \ref{cl: segment}.}
Suppose to the contrary that there is some $i\in [1,4]$, say $i=1$, such that $Int(A_1)= \emptyset$. Then $A_1$ is a chord  in $\mathcal{U}(C_2)$.  Since $C_2$ contains the $2$-vertex $y$,  ${C}_2$ is a removable circuit of $G$, a contradiction to Claim \ref{cl: remove}-(B) since $G-V(C_2)$ is balanced.
\hfill$\Box$

\medskip
\noindent
\textbf{The final step.}

By Claim~\ref{cl: segment}, let $y_1\in Int(A_1)$ be the neighbor of $x_1$. Let $Q$ be the component of $G-E(C)$ containing $y_1$. Since $d_G(y_1)=3$ by Claim~\ref{cl: H=G}, $Q$ is nontrivial. Obviously, $V(Q)\cap \{x_1,x_2,x_3,x_4\}=\emptyset$ since $\Delta(G)=3$.

  If there is a vertex $y_2$ in $V(Q)\cap (Int(A_2)\cup Int(A_3))$,  let $P$ be a $(y_1,y_2)$-path in $Q$. Since $\Delta(G)\leq 3$, $C_3=P\cup y_1Cy_2$ is a circuit containing $x_2$. Then
$C'={C}_2 \bigtriangleup C_3$ is a circuit of $G$ containing $y$ and  the chord $x_1y_1 \in \mathcal{U}(C')$. Thus $C'$ is a removable circuit of $G$,  a contradiction to   Claim~\ref{cl: remove}-(B)  since $G-V(C')$ is balanced.

If $V(Q)\cap (Int(A_2)\cup Int(A_3))=\emptyset$, then $V(Q)\cap V(C)\subseteq Int(A_4)\cup Int(A_1)$. Note that $|V(Q)\cap V(C)|\ge 2$ since $G$ is $2$-connected. Let $y_2, y_3\in V(Q)\cap V(C)$ be two ends of a segment $P'$ of $A_4\cup A_1$ such that the length of $P'$ is as large as possible. By Claim \ref{cl: H=G}, $G'=G-x_1x_3-y$ is a $2$-connected planar graph with a facial circuit $C$, and so $T'=\delta_{G'}(V(P'))\cap E(C)$ is a $2$-edge-cut of $G'$. Let $T=T'$ if $y_2, y_3\in Int(A_1)$, and otherwise $T=T'\cup \{x_1x_3\}$. Then $T$ is an edge-cut of $G$  with $|T| \leq 3$ and   the component of $G-T$  containing $y_2$ is balanced and doesn't contain $y$.  By (S3),
 this component contains a $2$-vertex (distinct from $y$), which contradicts  $V_2(G)=\{y\}$ by Claim \ref{cl: H=G}.
This completes the  proof of Lemma \ref{lm: water}.
 \end{proof}



\begin{thebibliography}{s2}

\bibitem{BM2008}
J.A. Bondy and U.S.R. Murty, Graph Theory, in: GTM, vol. 244, Springer, 2008.


\bibitem{Bouchet83}
A. Bouchet, Nowhere-zero integral flows on a bidirected graph, \JCTB~34 (1983), 279-292.


\bibitem{CLLZ2018}
J. Cheng, Y. Lu, R. Luo and C.-Q. Zhang, Signed graphs: from modulo flows to integer-valued flows, \SIAMDM~32 (2018),  956-965.





\bibitem{Diestel2010} R. Diestel,  Graph Theory, Fourth edn. Springer-Verlag (2010).

\bibitem{Fleischner1976}
H. Fleischner,   Eine gemeinsame Basis f\"ur die Theorie der eulerschen
Graphen und den Satz von Petersen. {\em Monatsh. Math.} 81 (1976),  267-278.

\bibitem{Jaeger1992}
F. Jaeger, N. Linial, C. Payan and M. Tarsi, Group connectivity of graphs --- A nonhomogeneous analogue of nowhere-zero flow properties, \JCTB~56 (1992), 165-182.




\bibitem{Lu Luo Zhang}
Y. Lu, R. Luo and C.-Q. Zhang, Multiple weak $2$-linkage and its applications on integer flows on signed graphs, \EJC~{69} (2018), 36-48.


\bibitem{MW1966}
D.M. Mesner and M.E. Watkins, Some theorems about $n$-vertex connected graphs, {\em J. Math. Mech.} 16 (1966), 321-326.




\bibitem{Seymour81}
P.D. Seymour, Nowhere-Zero $6$-Flows, \JCTB~30 (1981), 130-135.




\bibitem{Tutte54}
W.T. Tutte, A contribution to the theory of chromatic polynomials, \CJM~6 (1954), 80-91.



\bibitem{West1996} D.B. West, Introduction to Graph Theory, Upper Saddle River, NJ: Prentice Hall, (1996).


\bibitem{Xu2005}
R. Xu and C.-Q. Zhang, On flows in bidirected graphs, \DM~299 (2005), 335-343.

\bibitem{Zyka1987}
O. Z\'yka, Nowhere-zero $30$-flow on bidirected graphs, Thesis, Charles University, Praha, KAM-DIMATIA, Series 87-26 (1987).


\end{thebibliography}
\end{document}